\newtheorem{theorem}{Theorem}[section]
\newtheorem{lemma}[theorem]{Lemma}
\newtheorem{proposition}[theorem]{Proposition}
\newcounter{claim}[theorem]
\newcounter{cclaim}[theorem]
\newcommand{\E}{\mathrm{E}}
\newcommand{\F}{\mathrm{F}}
\newcommand{\G}{\mathrm{G}}
\newcommand{\Aut}{\mathrm{Aut}}
\newcommand{\Hom}{\mathrm{Hom}}
\newcommand{\Syl}{\mathrm{Syl}}\newcommand{\syl}{\mathrm{Syl}}
\newcommand{\GL}{\mathrm{GL}}
\newcommand{\Sp}{\mathrm{Sp}}
\newcommand{\SL}{\mathrm{SL}}
\newcommand{\0}{\emptyset}
\newcommand{\PSL}{\mathrm{PSL}}\newcommand{\PSp}{\mathrm{PSp}}
\newcommand{\Sym}{\mathrm{Sym}}
\newcommand{\Alt}{\mathrm{Alt}}
\def \eps {\epsilon}
\def \syl {\hbox {\rm Syl}}\def \Syl {\hbox {\rm Syl}}
\def \Aut{ \mathrm {Aut}}
\def \Co {\mbox {\rm Co}}
\def \PSU {\mbox {\rm PSU}}
\begin{document}

\title{A Family of fusion systems related to the groups $\Sp_4(p^a)$ and $\G_2(p^a)$}
 \author{Chris Parker}
  \author{Gernot Stroth}

\address{Chris Parker\\
School of Mathematics\\
University of Birmingham\\
Edgbaston\\
Birmingham B15 2TT\\
United Kingdom} \email{c.w.parker@bham.ac.uk}

\address{Gernot Stroth\\
Institut f\"ur Mathematik\\ Universit\"at Halle - Wittenberg\\
Theordor Lieser Str. 5\\ 06099 Halle\\ Germany}
\email{gernot.stroth@mathematik.uni-halle.de}

\maketitle

\begin{abstract}
A family  of exotic fusion systems generalizing the group fusion systems on Sylow $p$-subgroups of $\G_2(p^a)$ and $\Sp_4(p^a)$ is constructed.
\end{abstract}

\section{Introduction}
In this paper we will construct an infinite series of exotic fusion systems. More precisely for each  prime $p \geq 5$ we   build an exotic fusion system on a $p$-group which contains an extraspecial $p$-group of order $p^{p-2}$ of index $p$ (see Proposition~\ref{lem::exotic}). The catalyst for this construction  came from the authors' investigation of groups $G$ that contain a subgroup $H$ which is an automorphism group of a simple group of Lie type in characteristic $p$, such that $|G : H|$ is coprime to $p$ \cite{almostlie}.  In  \cite[Chapter 16]{almostlie} we apply the results of this article to extend the main theorem of \cite{SalStr} to groups of rank two with some exceptions related to the fact that the fusion systems constructed in this article are exotic. A thorough discussion of fusion systems is presented in \cite{Craven}.

Our construction of the exotic fusion systems develops in two phases.  First  for an arbitrary finite field $\mathbb F$, we define a group $P$ that generalizes the structure of the normalizer of a root subgroup in $\G_2(\mathbb F)$ and $\PSp_4(\mathbb F)$ and show that a certain amalgam exists if and only if $\mathbb F$ has prime order $p$ and $O_p(P)$ is extraspecial of order $p^{p-2}$. In the second phase, we show that the fusion system determined by the free amalgamated product of the amalgam is saturated and exotic.
The smallest of the amalgams is for $p=5$ on a Sylow $5$-subgroup of $\Sp_4(5)$ and has the sporadic group $\Co_1$ as a completion; however the fusion systems do not coincide.

\section{The amalgams}
Let $\mathbb F$ be a finite field of characteristic $p >0$,  $\mathbb F[X,Y]$ be the polynomial algebra in two commuting variables and $V_m$ the   $(m+1)$-dimensional subspace of $\mathbb F[X,Y]$ consisting of homogeneous polynomials of degree $p-1\ge m\ge 1$. Set  $L=\mathbb F^\times \times \GL_2(\mathbb F)$. Then, for $(t, \left(\begin{smallmatrix}\alpha &\beta\\\gamma&\delta \end{smallmatrix}\right))\in L$, we define an action of this element on $V_m$ via  $$X^aY^b \cdot(t, \left(\begin{smallmatrix}\alpha &\beta\\\gamma&\delta \end{smallmatrix}\right))=t (\alpha X + \beta Y)^a(\gamma X + \delta Y)^b$$ where $a+b=m$. Since $m< p$, $V_m$ is an irreducible $\mathbb FL$-module \cite{BN}.

 Define a bilinear function $\beta_m$ on $V_m$ by setting $$\beta_m(X^aY^b,X^cY^d)=\begin{cases} 0& \text{if } a\ne d\\\frac{(-1)^a}{\left ( m \atop a\right)}&\text{if } a=d\end{cases}$$
 and extending bilinearly.

 \begin{lemma}
 The following hold:
 \begin{enumerate}
 \item  $\beta_m$ is alternating if and only if $m$ is odd.
\item    $\beta_m$ is non-degenerate.
 \item $\beta_m$ is preserved up to scalars by $L$ and the scale factor of an element $(t, A)\in L$  is $t^2(\det A)^m$.
 \end{enumerate}
 \end{lemma}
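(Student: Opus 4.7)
The plan is to handle the three parts separately.

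\emph{Part (i).} I first derive the symmetry relation $\beta_m(v,w) = (-1)^m\beta_m(w,v)$ on all of $V_m$. It suffices to check this on the basis: the values $\beta_m(X^aY^b,X^cY^d)$ and $\beta_m(X^cY^d,X^aY^b)$ vanish simultaneously unless $a=d$ (equivalently $b=c$, since $a+b=c+d=m$), and in the non-vanishing case they differ by $(-1)^{c-a}\binom{m}{a}/\binom{m}{c}=(-1)^{m-2a}=(-1)^m$. Since $p\ge 5$ is odd, for $m$ odd antisymmetry is equivalent to being alternating. For $m$ even, the monomial $X^{m/2}Y^{m/2}$ has $\beta_m(X^{m/2}Y^{m/2},X^{m/2}Y^{m/2})=(-1)^{m/2}/\binom{m}{m/2}\ne 0$, blocking the alternating property.

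\emph{Part (ii).} Ordering the basis $\{X^aY^{m-a}\}_{a=0}^m$ by $a$, the Gram matrix of $\beta_m$ is anti-diagonal, with anti-diagonal entries $(-1)^a/\binom{m}{a}$; these are all nonzero (note $m<p$), so the matrix is invertible and $\beta_m$ is non-degenerate.

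\emph{Part (iii).} The factor $\mathbb F^\times\le L$ scales every element of $V_m$ by $t$, hence scales $\beta_m$ by $t^2$. For the $\GL_2(\mathbb F)$-factor, the cleanest route is the differential-operator reformulation
$$\beta_m(f,g)\;=\;\frac{1}{m!}\bigl[f(-\partial_Y,\partial_X)\,g\bigr]\big|_{X=Y=0},$$
which is a direct check on basis monomials (applying $(-1)^a\partial_Y^a\partial_X^b$ to $X^cY^d$ and evaluating at the origin survives exactly when $c=b,\ d=a$ and yields $(-1)^a a!\,b!$). For $A=\left(\begin{smallmatrix}\alpha & \beta\\\gamma & \delta\end{smallmatrix}\right)$, set $X':=\alpha X+\beta Y$ and $Y':=\gamma X+\delta Y$, so $(f\cdot A)(X,Y)=f(X',Y')$. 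The chain rule gives $\partial_X=\alpha\partial_{X'}+\gamma\partial_{Y'}$ and $\partial_Y=\beta\partial_{X'}+\delta\partial_{Y'}$, whence
$$\delta\partial_X-\gamma\partial_Y\;=\;(\det A)\,\partial_{X'},\qquad \beta\partial_X-\alpha\partial_Y\;=\;-(\det A)\,\partial_{Y'}.$$
Substituting these into $(f\cdot A)(-\partial_Y,\partial_X)=f(\beta\partial_X-\alpha\partial_Y,\,\delta\partial_X-\gamma\partial_Y)$ and using that $f$ is homogeneous of degree $m$ produces $(\det A)^m f(-\partial_{Y'},\partial_{X'})$. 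Applying this operator to $(g\cdot A)(X,Y)=g(X',Y')$ and evaluating at $(X,Y)=(0,0)$ (equivalently $(X',Y')=(0,0)$) then yields $\beta_m(f\cdot A,g\cdot A)=(\det A)^m\beta_m(f,g)$, which together with the $t^2$ scaling gives the claim.

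\textbf{Main obstacle.} The cleanness of (iii) hinges on recognising $\beta_m$ as the above differential pairing; this reformulation is only a short monomial verification, but once in hand, the chain-rule calculation makes the $(\det A)^m$ factor transparent. As an alternative one can verify $\GL_2(\mathbb F)$-equivariance directly on a generating set (diagonal matrices together with the upper-triangular unipotent $\left(\begin{smallmatrix}1 & \beta\\0 & 1\end{smallmatrix}\right)$); the unipotent case then forces a double-sum manipulation that reduces, via the Vandermonde-type identity $\sum_i(-1)^i\binom{n}{i}=0$ for $n\ge 1$, to the original value of $\beta_m$.
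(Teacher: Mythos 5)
Your proof is correct, and for part (iii) it takes a genuinely different route from the paper. The paper proves invariance of $\beta_m$ under $\SL_2(\mathbb F)$ by checking a generating set consisting of $\left(\begin{smallmatrix}\lambda&0\\0&\lambda^{-1}\end{smallmatrix}\right)$, $\left(\begin{smallmatrix}0&1\\-1&0\end{smallmatrix}\right)$ and $\left(\begin{smallmatrix}1&0\\1&1\end{smallmatrix}\right)$; the unipotent generator requires exactly the double-sum manipulation you flag as the alternative, ending in the identity $\sum_{j}(-1)^{j}\binom{d-a}{j}=0$. The paper then obtains the general scale factor $t^2(\det A)^m$ by testing the single extra element $(t,\left(\begin{smallmatrix}\lambda&0\\0&1\end{smallmatrix}\right))$. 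Your differential-operator reformulation $\beta_m(f,g)=\frac{1}{m!}\bigl[f(-\partial_Y,\partial_X)g\bigr]\big|_{0}$ (valid since $m<p$ makes $m!$ invertible) handles all of $\GL_2(\mathbb F)$ in one stroke via the chain rule and homogeneity, making the $(\det A)^m$ factor structural rather than computational; the paper's approach is more elementary but requires the explicit binomial identity. For parts (i) and (ii) the two arguments essentially coincide (anti-diagonal Gram matrix with nonzero entries), though you are more explicit than the paper about why evenness of $m$ fails, exhibiting the monomial $X^{m/2}Y^{m/2}$ with nonzero self-pairing, where the paper simply asserts the equivalence. One small point: the equivalence of antisymmetry and the alternating property that you invoke needs odd characteristic, which is the standing hypothesis in the rest of the paper but is worth noting since the lemma itself is stated for arbitrary $p>0$.
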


 \begin{proof}
Since the matrix associated with $\beta_m$ has zeros everywhere other than on the anti-diagonal, $\beta_m$ is non-degenerate and alternating if and only if $m$ is odd.

Let $G=\{(1,A) \mid A \in \SL_2(\mathbb F)\}$. We will show that $\beta_m$ is $G$-invariant. For this exercise we forget the first factor of the elements of $G$ and simply work with matrices. We also suppress $\beta_m$.  It suffices to prove that the form is invariant under a  set of generators of $G$.  So observe that
 $$G= \langle \left(\begin{smallmatrix}\lambda&0\\0&\lambda^{-1} \end{smallmatrix}\right),
 \left(\begin{smallmatrix}0&1\\-1&0 \end{smallmatrix}\right),
 \left(\begin{smallmatrix}1&0\\1&1 \end{smallmatrix}\right)\mid \lambda \in \mathbb F\rangle.$$
Suppose that $\lambda \in \mathbb F$. Then
\begin{eqnarray*}
(X^aY^b\left(\begin{smallmatrix}\lambda&0\\0&\lambda^{-1} \end{smallmatrix}\right), X^cY^d\left(\begin{smallmatrix}\lambda&0\\0&\lambda^{-1} \end{smallmatrix}\right))&=&((\lambda X)^a(\lambda^{-1}Y)^b, (\lambda X)^c(\lambda^{-1}Y)^d)\\\
&=& \lambda^{a-b+c-d}(X^aY^b,X^cY^d).
\end{eqnarray*}
 Since $(X^aY^b,X^cY^d)$ is only non-zero when $a=d$ (so $b=c$), the form is invariant under these elements.
We have
 \begin{eqnarray*}
(X^aY^b \left(\begin{smallmatrix}0&1\\-1&0 \end{smallmatrix}\right), X^cY^d \left(\begin{smallmatrix}0&1\\-1&0 \end{smallmatrix}\right))&=& (Y^a(-X)^b,Y^c(-X)^d)\\
&=&(-1)^{b+d}  (X^bY^a,X^dY^c).
\end{eqnarray*}
 The last term is non-zero if and only if $b=c$ (so $a=d$). Hence the final term is
 $$(-1)^{b+d}(X^bY^a,X^dY^c) = \frac{(-1)^{b+b+d}}{ \left( m \atop b\right) }=  \frac{(-1)^a}{ \left( m \atop m-b\right) } = \frac{(-1)^a}{ \left( m \atop a\right) }= (X^aY^b,X^cY^d)$$ as required.

Finally we consider
\begin{eqnarray*}
(X^aY^b \left(\begin{smallmatrix}1&0\\1&1 \end{smallmatrix}\right), X^cY^d \left(\begin{smallmatrix}1&0\\1&1 \end{smallmatrix}\right))&=& (X^a(X+Y)^b, X^c(X+Y)^d)\\
&=& ( \sum_{j=0}^b \left (b \atop j\right)X^{a+j}Y^{b-j}, \sum_{k=0}^d \left ( d \atop k\right)X^{c+(d-k)}Y^k)\\
&=& \sum_{j=0, a+j=k}^b (-1)^{a+j}\frac{\left(b \atop j\right)\left(d \atop k\right)}{\left(m \atop a+j\right)}\\
&=& \sum_{j=0, a+j=k}^b  (-1)^{a+j}\frac{b! d! (a+j)!(m-a-j)!}{(b-j)!j!(d-k)!k! m!}\\
&=& \sum_{j=0, a+j=k}^b  (-1)^{a+j}\frac{b! d! }{j!(d-k)! m!}\\
&=& \frac{b!d!}{m^!(d-a)!}\sum_{j=0}^{d-a}  (-1)^{a+j}\frac{(d-a)!}{j!(d-a-j)!}\\
&=& \frac{b!d!}{m^!(d-a)!}\sum_{j=0}^{d-a}  (-1)^{a+j}\left( d \atop j\right).
\end{eqnarray*}

Now the final term here is zero unless $d=a$ in which case
$$\frac{b!d!}{m^!(d-a)!}\sum_{j=0}^{d-a}  (-1)^{a+j}\left( d \atop j\right)= (-1)^a\frac{(m-a)!a!}{m!} = \frac{(-1)^a}{\left( m \atop a\right)} $$
as required to show that the form is invariant.  This establishes (ii).

Given (ii), to prove (iii), we note that the matrix
$(t,\left(\begin{smallmatrix}\lambda &0\\0&1 \end{smallmatrix}\right))$
 scales the form by $t^2\det \lambda ^m$.
 \end{proof}

From now on suppose that both $p$ and  $m$ are odd. The construction of the group which will turn out to be $O_p(P)$ only requires that $\beta_m$ is a non-degenerate alternating form.
Set  $$Q= V_m\times \mathbb F^+$$  and define a binary operation on  $Q$  by $$(v, y)(w,z) = (v+w,y+z+\beta_m(v,w))$$ for $(v,y), (w,z)\in Q$.
  Then, as $\beta_m$ is alternating, $\beta_m(v,v)= 0$ and so $Q$ is a group.

 \begin{lemma}\label{lem::centralizers1} The following statements hold.
 \begin{enumerate}
 \item If $(v, y) \in Q $, then $C_Q((v,y)) = \{(w,z)\mid w\in v^\perp, z \in \mathbb F\}$.
 \item The $p$-group $Q$ is special with $$Z(Q)= \{(0,\lambda)\mid \lambda \in \mathbb F\}= Q'= \Phi(Q).$$
 \end{enumerate}
 \end{lemma}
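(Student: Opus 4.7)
The plan is to derive everything from a single direct calculation in $Q$, using the alternating, non-degenerate character of $\beta_m$ and that $p$ is odd. First I would write down the product and inverse formulas explicitly: from the definition $(v,y)(w,z)=(v+w,y+z+\beta_m(v,w))$, and since $\beta_m(v,v)=0$, the identity is $(0,0)$ and $(v,y)^{-1}=(-v,-y)$.

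For (i), I would compare $(v,y)(w,z)$ with $(w,z)(v,y)$. The first coordinates always agree, so the two elements commute if and only if $y+z+\beta_m(v,w)=y+z+\beta_m(w,v)$, i.e.\ $2\beta_m(v,w)=0$. Since $p$ is odd, this is equivalent to $\beta_m(v,w)=0$, that is $w\in v^\perp$. Note that the second coordinate $z$ plays no role, which gives exactly the description in (i).

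For (ii), I would tackle the three equalities in turn. For $Z(Q)$: by (i) an element $(v,y)$ is central iff $v\in V_m^\perp$, which by non-degeneracy of $\beta_m$ forces $v=0$; hence $Z(Q)=\{(0,\lambda)\mid\lambda\in\mathbb F\}$. For $Q'$: a short computation with the product and inverse formulas (using $\beta_m(v+w,v+w)=0$ to kill the cross terms) gives the clean commutator formula
\[
[(v,y),(w,z)] = (0,\,2\beta_m(v,w)).
\]
Because $p$ is odd and $\beta_m$ is non-degenerate, $\{2\beta_m(v,w)\mid v,w\in V_m\}=\mathbb F$, so these commutators already sweep out all of $Z(Q)$; combined with the containment $Q'\subseteq Z(Q)$ (which follows from $Q/Z(Q)$ being abelian, itself a consequence of the commutator formula), this gives $Q'=Z(Q)$.

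For $\Phi(Q)=Q'Q^p$ it then suffices to show $Q^p\le Q'$, which I would obtain by showing the stronger statement that $Q$ has exponent $p$. Decomposing $(v,y)=(v,0)(0,y)$ with $(0,y)$ central, an easy induction using $\beta_m(v,v)=0$ gives $(v,0)^k=(kv,0)$, so $(v,0)^p=(0,0)$ and therefore $(v,y)^p=(0,0)(0,py)=(0,0)$. Thus $Q^p=1$ and $\Phi(Q)=Q'=Z(Q)$, completing (ii). The only subtlety is the commutator computation, where one has to be careful that the extra summand $\beta_m(-v-w,v+w)$ coming from the product in the last step vanishes by the alternating property; beyond that the argument is mechanical.
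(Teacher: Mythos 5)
Your proof is correct and follows essentially the same route as the paper's: the commutation criterion $2\beta_m(v,w)=0$ for (i), and the commutator formula $[(v,y),(w,z)]=(0,2\beta_m(v,w))$ together with non-degeneracy for (ii). The only difference is that you spell out the details (surjectivity of the commutator map onto $Z(Q)$ and the exponent-$p$ computation for $\Phi(Q)$) that the paper dismisses with ``plainly $Q/Z(Q)$ is abelian of exponent $p$.''
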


 \begin{proof}
 Let $(w,z) \in C_Q((v,y))$. Then $$(w,z)(v,y)= (w+v,z+y+\beta_m(w,v))$$ and $$(v,y)(w,z)= (v+w,y+z+\beta_m(v,w)).$$
Since $\beta_m$ is alternating, we see that these two equation are equal if and only if $\beta_m(v,w)= 0$. Thus (i) holds and, as $\beta_m$ is non-degenerate, we have $Z(Q)= \{(0,\lambda)\mid \lambda \in \mathbb F\}$.

Plainly $Q/Z(Q)$ is abelian of exponent $p$. Hence to prove (ii), we just need to show that $Q'\ge Z(Q)$. So we calculate
\begin{eqnarray*}[(v,y),(w,z)]&=& (-v,-y)(-w,-z)(v,y)(w,z)\\&=& (-v-w,-y-z+\beta_m(y,z))(v+w,y+z+\beta_m(y,z))\\&=&(0,2 \beta(y,z)).\end{eqnarray*}
Thus (ii) follows as $p$ is odd.

 \end{proof}

 For $(t, A) \in L$ and $(v,z) \in Q$ define $$(v,z)^{ (t,A)}= (t(v\cdot A),t^2(\det A)^mz).$$ Notice that
 \begin{eqnarray*} ((v, y) (w,z)) ^{(t,A)}  &=& (v+w,y+z+\beta_m(v,w))^{(t,A)}\\ &=& (t(v+w)\cdot A, t^2(\det A)^m)(y+z+\beta_m(v,w))\\ &=& (tv\cdot A + t w \cdot A, t^2(\det A)^my+ t^2 (\det A)^mz+ \beta_m(t v\cdot A, t w \cdot A))\\
 &=& (tv \cdot A, t^2 (\det A)^my)  (tw \cdot A,t^2(\det A)^mz)\\
 &=&
 (v, y)^{(t,A)}  (w,z) ^{(t,A)}.
  \end{eqnarray*}
Therefore, $L$ acts on $Q$.

We define the following subgroups of $L$: $$B_0 = \mathbb F^\times \times \left\{\left(\begin{smallmatrix}\alpha&0\\\gamma&\beta\end{smallmatrix}\right)  \mid  \alpha, \beta \in \mathbb F^\times, \gamma \in \mathbb F\right\}$$
 and $$S_0 = \{1\} \times \left\{\left(\begin{smallmatrix}1&0\\\gamma&1\end{smallmatrix}\right)  \mid  \gamma \in \mathbb F\right\}.$$

Next we form the semidirect product of $Q$ and $L$ and some subgroups
 \begin{eqnarray*}P& =&P(m,\mathbb F)=LQ;\\
 B&=& B_0Q; \text{ and}\\
 S&=&S_0Q.
 \end{eqnarray*}

Plainly  $B= N_P(S)$.

 \begin{lemma}\label{lem::centralizers2} Suppose that $m< p$. Then the following hold:
 \begin{enumerate}
\item $C_L(Q)= \{(\mu^{-m}, \left(\begin{smallmatrix}\mu&0\\0&\mu\end{smallmatrix}\right))\mid \mu \in \mathbb F^\times\}$;
\item $C_Q(S_0)= \langle (\mu X^m,\lambda) \mid \lambda, \mu \in \mathbb F\rangle$; and
\item $C_{Q/Z(Q)}(S_0)= C_{Q/Z(Q)}(s) = C_Q(S_0)/Z(Q)$ for all $s \in S_0^\#$.
 \end{enumerate}
 \end{lemma}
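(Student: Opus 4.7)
All three parts flow from the explicit action formula $(v,z)^{(t,A)} = (t v\cdot A,\, t^2(\det A)^m z)$ given just before the lemma.

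\medskip

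For (i), the condition that $(t,A)$ centralizes $Q$ reduces to two requirements: $A$ acts on $V_m$ as the scalar $t^{-1}$, and $t^2(\det A)^m = 1$. I would first pin down which $A \in \GL_2(\mathbb F)$ can act as a scalar on $V_m$. Expanding $X^m\cdot A = (\alpha X + \beta Y)^m$ and forcing the result to be a scalar multiple of $X^m$ gives $\beta = 0$ (since $\binom{m}{1}=m\ne 0$ in $\mathbb F$ because $m<p$); symmetrically $\gamma = 0$, so $A$ is diagonal. Comparing the scalars on $X^m$ and $X^{m-1}Y$ then forces $\alpha = \delta$. Writing $A = \mu I$, the scalar action is $\mu^m$, so $t = \mu^{-m}$; the condition on $Z(Q)$ becomes $\mu^{-2m}\mu^{2m} = 1$, which is automatic.

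\medskip

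For (ii), any $s = (1,u) \in S_0$ with $u = \left(\begin{smallmatrix}1&0\\\gamma&1\end{smallmatrix}\right)$ has $\det u = 1$, so $t^2(\det u)^m = 1$ and the action on $Z(Q)$ is already trivial. Hence $(v,z) \in C_Q(S_0)$ iff $v\cdot u = v$ for every $\gamma \in \mathbb F$. Since $X\cdot u = X$ while $Y\cdot u = \gamma X + Y$, the common fixed subspace in $V_m$ is $\langle X^m\rangle$, yielding the stated form of $C_Q(S_0)$.

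\medskip

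For (iii), the inclusions $C_Q(S_0)/Z(Q) \subseteq C_{Q/Z(Q)}(S_0) \subseteq C_{Q/Z(Q)}(s)$ are automatic for any $s \in S_0$. The content is the reverse inclusion for any non-trivial $s = (1,u)$, which, after identifying $Q/Z(Q)$ with $V_m$, asks that $u$ already fixes only $\langle X^m\rangle$. I would show this by proving that $u$ acts on $V_m$ as a single Jordan block of size $m+1$: a direct computation gives $(u - \mathrm{id})^m Y^m = m!\,\gamma^m\, X^m$, which is nonzero since $m<p$ and $\gamma \ne 0$. Thus $\dim\ker(u - \mathrm{id}) = 1$ on $V_m$, giving the required reverse inclusion.

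\medskip

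\textbf{Main obstacle.} The only non-routine step is the Jordan block computation $(u - \mathrm{id})^m Y^m \ne 0$, i.e., verifying that a non-trivial unipotent of $S_0$ acts as a single Jordan block on the symmetric power $V_m$; this is precisely where the hypothesis $m < p$ is used. Everything else is bookkeeping with the given action formula.
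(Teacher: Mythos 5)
Your proof is correct, but it takes a genuinely more computational route than the paper at both of the non-routine points. For part (i) the paper simply observes that $C_L(Q)\le Z(L)$ (since $Q/Z(Q)\cong V_m$ is an irreducible $L$-module) and then evaluates the action on $X^m$ to get $t=\mu^{-m}$; you instead extract the diagonal, then scalar, shape of $A$ by hand from the coefficients of $(\alpha X+\beta Y)^m$ and of $X^{m-1}Y\cdot A$, which is longer but entirely self-contained (one small point: to deduce $\beta=0$ from the vanishing of the coefficient $m\alpha^{m-1}\beta$ you should also note that $\alpha=0$ is impossible, since then $X^m\cdot A$ would be a nonzero multiple of $Y^m$). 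For parts (ii) and (iii) the difference is more substantive: the paper appeals to Smith's theorem \cite{Sm} on fixed points of unipotent radicals on irreducible modules to conclude that $C_{Q/Z(Q)}(S_0)$ is one-dimensional and then asserts the same for each non-trivial $s\in S_0$, whereas you prove directly that a non-trivial unipotent $u$ acts on $V_m$ as a single Jordan block via $(u-\mathrm{id})^mY^m=m!\,\gamma^mX^m\ne 0$. Your computation is precisely the fact that underwrites the paper's passage from $C_{Q/Z(Q)}(S_0)$ to $C_{Q/Z(Q)}(s)$ for a single element (and it is exactly where the hypothesis $m<p$ is used), so on that point your argument is arguably the more complete one; what the paper's approach buys is brevity and the conceptual framing of the statement as an instance of a general result on irreducible modules for groups with a split $BN$-pair, rather than an ad hoc calculation.
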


 \begin{proof}  Obviously $C_L(Q) \leq Z(L)$ and so the elements are of type $(t,\left(\begin{smallmatrix}\mu&0\\0&\mu\end{smallmatrix}\right))$
 Now the action on $(X^m,0)$ gives $(t\mu^mX^m,0)$. Hence $t = \mu^{-m}$.
 \\
 \\
 As $Q/Z(Q)$ is an irreducible $L$-module, we have with \cite{Sm} that $C_{Q/Z(Q)}(S_0)$ is 1-dimensional and so the same applies for all $1 \not= s \in S_0$. Obviously $S_0$ centralizes $(X^m,0)$, which implies (iii). As $[Z(Q),S_0] = 1$, also (ii) follows. \end{proof}

 So we have constructed the group $P$ of our amalgam. Next we will construct $K$, which is an extension of the natural module by $\SL_2(\mathbb F)$.  Hence we will consider  $K$ as the subgroup of $\SL_3(\mathbb F)$  consisting of the matrices of the form $$\left(\begin{smallmatrix}1&0&0\\\alpha&\beta&\gamma\\\delta&\eps&\phi\end{smallmatrix}\right)$$ with determinant $1$.
 Let $$C=  \left\{\left(\begin{smallmatrix}1&0&0\\\alpha&\theta&0\\\delta&\eps&\theta^{-1}\end{smallmatrix}\right) \mid \alpha, \delta, \eps \in \mathbb F, \theta\in \mathbb F^\times \right\},$$
 $$D=  \left\{\left(\begin{smallmatrix}1&0&0\\\alpha&1&0\\\delta&\eps&1\end{smallmatrix}\right) \mid \alpha, \delta, \eps  \in \mathbb F\right\}$$
  and $W= O_p(K)$. So $$W= \left\{\left(\begin{smallmatrix}1&0&0\\\alpha&1&0\\\delta&0&1\end{smallmatrix}\right)\mid \alpha, \delta \in \mathbb F\right\}.$$
 Then we have \begin{eqnarray}\label{q1}
\left(\begin{smallmatrix}1&0&0\\\alpha&1&0\\\delta&\eps&1\end{smallmatrix}\right)^{\left(\begin{smallmatrix}1&0&0\\0&\theta&0\\0&0&\theta^{-1}\end{smallmatrix}\right)}= \left(\begin{smallmatrix}1&0&0\\\alpha\theta^{-1}&1&0\\\delta\theta&\eps\theta^2&1\end{smallmatrix}\right).\end{eqnarray}

Our objective is to determine under what conditions $P/C_L(Q)$ has a subgroup, which we  call $W_0$, such that $N_{P/Z(P)}(W_0Z(P)/Z(P))$ is isomorphic to $C$ in such a way that $W_0$ maps to $W$. Suppose that $W_0$ is such a subgroup, and let $C_0$ be the preimage of $N_P(W_0)$. Obviously $Z(Q) =Z(S)\le W_0$ and, after conjugation in $P$, we may assume that $N_S(W_0)\in \Syl_p(C_0)$. Suppose that $W_0 \le Q$. Then,  as $Q$ is special by Lemma~\ref{lem::centralizers1} (ii), $N_S(W_0) = Q$  and so $m=1$.  Furthermore, $W_0= \{(u,z)\mid u\in w^ \perp\}$ where $x$ is an arbitrary member of $W_0 \setminus Z(Q)$. But then $S$ normalizes $W_0$, which is impossible as $S \not \le C_0$, As $Z(Q) \le W_0$, $N_S(W_0)$ contains $R=\langle(X^m, \lambda)\mid \lambda \in \mathbb F\rangle$ which is the preimage of $C_{Q/Z(Q)}(S)$. Let $Q_0 = C_Q(R)$.  Then $ W_0Q_0/Q_0$ is normalized by $C_0Q_0/Q_0 \le B/Q_0$.  Since $C_0$ acts irreducibly on $W_0/Z(Q)$, $|W_0Q_0/Q_0|=p^a$ and, since $B$ normalizes $Q$ and $Q_0S$, $Q_0W$ is diagonal to these subgroups. We intend to determine the elements of $B$ which are candidates for the diagonal elements of $C_0$.  Now acting on the $S_0Q_0/Q_0$, elements of the form $d=(t,  \left(\begin{smallmatrix}\lambda &0\\0&\mu\end{smallmatrix}\right))$ normalize $\langle \lambda Y^m\mid \lambda \in \mathbb F\rangle+Q_0$ and $ Q_0S_0$. Furthermore,  $d$ acts by scaling $Y^m$ by $t\mu^m$ and mapping $\left(\begin{smallmatrix}   1 &0\\1&1\end{smallmatrix}\right)$ to  $\left(\begin{smallmatrix}   1 &0\\\lambda \mu^{-1}&1\end{smallmatrix}\right)$.  Assume that  $d \in C_0$ is in the image of an element of $C$ which   acts on $D$ as $\left(\begin{smallmatrix}1&0&0\\0&\theta&0\\0&0&\theta^{-1}\end{smallmatrix}\right)$. Then   the entries in $d$ must satisfy
\begin{eqnarray}\label{eq1}
t\mu^m=\lambda\mu^{-1}= \theta^{-1}.
\end{eqnarray}
Furthermore, using Equation (\ref{q1}) and $(X^m)^{d}= t\lambda^mX_m$,  we additionally require:
 \begin{eqnarray}\label{eq2}
t\lambda^m= \theta^2.
\end{eqnarray}
There is also a third equation forced by the action of $d$ on $Z(Q)$, but it turns out that this is dependent on the former two equations.
The first equality in Equation~(\ref{eq1}) yields  $t= \lambda \mu^{-m-1}$ and then combining Equation~(\ref{eq1}) and (\ref{eq2}) gives us
\begin{eqnarray*}
1&=& \theta^{-2}\theta^2=t^2\mu^{2m}t\lambda^m= t^3 \mu^{2m}\lambda^m\\&=&\lambda^3\mu^{-3m-3}\mu^{2m}\lambda^m=(\lambda\mu^{-1})^{m+3}= \theta^{-(m+3)}.
\end{eqnarray*}
Since $\theta$ is an arbitrary element of $\mathbb F^\times$, it follows that every element of $\mathbb F^\times$ is an $(m+3)$rd root of unity. Thus, $\mathbb F$ is finite and  letting $ p^a= |\mathbb F|$, we have $p^a-1$ divides $m+3$.  Since $m<p$ and $m$ is odd, we deduce that $a=1$ and either $p=3$ and $m=1$ or $p-1= m+3$.  Assume that $p=3$ and $m=1$. Let $$U= \langle  (1,\left( \begin{smallmatrix} 1&0\\\gamma&1 \end{smallmatrix}\right))(\mu X^m,0)\mid \gamma, \mu \in \mathbb F\rangle.$$ Then $U$ has index $9$ in $S$ and $U$ contains no non-trivial normal subgroups of $S$. Hence $S$ is isomorphic to a Sylow $3$-subgroup of $\Alt(9)$.  In the Sylow $3$-subgroup of $\Alt(9)$ we can show that every elementary abelian subgroup of order $9$ is contained in the extraspecial subgroup of order $27$ or has centralizer of order $27$. Hence this case does not occur.

\begin{proposition}\label{lem::amalgam}
 Suppose that $P/C_L(Q)$ has a $p$-subgroup $W_0$ such that $N_{P/Z(P)}(W_0Z(P)/Z(P))$ is isomorphic to $C$ in such a way that $W_0$ maps to $W$. Then   $\mathbb F$ has prime order $p$ and $p=m+4$.
\end{proposition}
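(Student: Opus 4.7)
The plan is to convert the abstract hypothesis on $W_0$ into explicit arithmetic constraints by tracking how a Cartan element of the target group $C$ must act. The first step is to discard the possibility $W_0 \le Q$: because $Q$ is special by Lemma~\ref{lem::centralizers1}(ii), any faithful action on $W_0/Z(Q)$ by a subgroup of $C_0$ combined with $N_S(W_0) \in \Syl_p(C_0)$ would force $N_S(W_0) = Q$, hence $m=1$, and $W_0$ would be a hyperplane of $Q$ with respect to $\beta_m$. But then $S$ would normalize $W_0$, contradicting $S \not\le C_0$.

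In the remaining case, $Z(Q) = Z(S) \le W_0$ holds automatically and $C_0$ acts irreducibly on $W_0/Z(Q)$, so Lemma~\ref{lem::centralizers2}(ii)--(iii) forces $N_S(W_0)$ to contain $R := \langle(X^m,\lambda)\mid \lambda\in\mathbb F\rangle$. Setting $Q_0 = C_Q(R)$, I would then work inside $B/Q_0$ and match the action of the Cartan element $\mathrm{diag}(1,\theta,\theta^{-1}) \in C$ on $D$, as recorded in~(\ref{q1}), against the action of a candidate diagonal element $d=(t,\left(\begin{smallmatrix}\lambda&0\\0&\mu\end{smallmatrix}\right)) \in B$ on the image of $Y^m$ and on the image of the unipotent generator of $S_0$ in their respective quotients modulo $Q_0$. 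This matching is exactly what produces equations~(\ref{eq1}) and~(\ref{eq2}).

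The main computation is then elementary: eliminating $t = \lambda\mu^{-m-1}$ from~(\ref{eq1}) and substituting into~(\ref{eq2}) yields $\theta^{m+3}=1$ for every $\theta \in \mathbb F^\times$, so $\mathbb F$ is finite and $p^a-1 \mid m+3$, where $|\mathbb F|=p^a$. The assumption $m < p$ with $m$ odd reduces this to $(p,m)=(3,1)$ or $a=1$ with $p = m+4$. The only genuinely delicate step---and the main obstacle---is ruling out $(p,m)=(3,1)$; for this I would identify $S$ with a Sylow $3$-subgroup of $\Alt(9)$ by exhibiting the index-$9$ subgroup $U$ described in the excerpt, which contains no non-trivial normal subgroup of $S$, and then appeal to the structure of elementary abelian subgroups of order $9$ in that Sylow subgroup---each either lying in the extraspecial subgroup of order $27$ or having centralizer of order $27$---to contradict the normalizer structure that $W_0$ is required to possess.
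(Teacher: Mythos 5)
Your proposal follows the paper's own argument step for step: eliminating the case $W_0\le Q$ via the special structure of $Q$, locating $R$ and $Q_0=C_Q(R)$, matching the diagonal element $d$ against the Cartan element of $C$ to obtain equations~(\ref{eq1}) and~(\ref{eq2}), deriving $\theta^{m+3}=1$ and hence $p^a-1\mid m+3$, and finally excluding $(p,m)=(3,1)$ through the identification of $S$ with a Sylow $3$-subgroup of $\Alt(9)$. This is essentially the same proof, correctly reproduced.
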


So from now on suppose that $p= m+4$ and $\mathbb F$ has order $p$. In particular, by Lemma~\ref{lem::centralizers2}, $Q$ is extraspecial of order $p^{p-2}$ and of exponent $p$ and $P$ is isomorphic to a subgroup of $p^{1+(p-3)}_+: \Sp_{p-3}(p)$ which is isomorphic to a subgroup of $\Sp_{p-1}(p)$.  In this representation the Jordan form of a $p$-element of $S$ has no blocks of size $p$ and consequently $S$ has exponent $p$. We now explicitly show that when $p=m+4$, then $C$ is isomorphic to a subgroup of $P/C_L(Q)$. To do this, we first write down a candidate for $W_0$ and then determine its normalizer.

Define $$w(\alpha)= \sum_{j=0}^m \frac{\alpha^{j+1}}{(j+1)}\left( m \atop j\right ) X^jY^{m-j} \in V_m$$ and set $$W_0= \langle (1,\left( \begin{smallmatrix} 1&0\\\gamma&1 \end{smallmatrix}\right))(w(\gamma), \delta))\mid \gamma, \delta\in \mathbb F\rangle \le S.$$

To calculate explicitly in the extraspecial group $Q$, we need the following facts:
\begin{lemma}\label{formwawb} Suppose that $\lambda, \mu \in \mathbb F$. Then
\begin{enumerate}
\item $\beta_m(\lambda X^m , w(\mu)) = -\lambda \mu$; and
\item $\beta_m(w(\lambda),w(\mu)) = \frac{(\lambda-\mu)^{m+2}- \lambda^{m+2}+ \mu ^{m+2}}{(m+1)(m+2)} .$
\end{enumerate}
\end{lemma}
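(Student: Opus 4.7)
The plan is to compute both quantities by expanding $w(\alpha)$ as an explicit linear combination of monomials and then applying the bilinearity of $\beta_m$ together with the fact that $\beta_m(X^aY^b, X^cY^d)$ vanishes unless $a=d$.

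For (i), I expand
\[
\beta_m(\lambda X^m, w(\mu)) = \sum_{j=0}^m \lambda\,\frac{\mu^{j+1}}{j+1}\binom{m}{j}\beta_m(X^m,X^jY^{m-j}).
\]
The pairing rule $a=d$ forces $m = m-j$, so only the $j=0$ term survives, giving $\lambda\mu\,\beta_m(X^m,Y^m) = \lambda\mu\,(-1)^m/\binom{m}{m} = -\lambda\mu$ because $m$ is odd.

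For (ii), the same expansion gives
\[
\beta_m(w(\lambda),w(\mu)) = \sum_{j,k} \frac{\lambda^{j+1}\mu^{k+1}}{(j+1)(k+1)}\binom{m}{j}\binom{m}{k}\beta_m(X^jY^{m-j},X^kY^{m-k}),
\]
and the diagonal condition $j = m-k$ reduces this to a single sum in $j$. After substituting $\beta_m(X^jY^{m-j},X^{m-j}Y^j) = (-1)^j/\binom{m}{j}$, the $\binom{m}{k} = \binom{m}{m-j}$ factor cancels the denominator, and I am left with
\[
\beta_m(w(\lambda),w(\mu)) = \sum_{j=0}^m \frac{(-1)^j}{(j+1)(m-j+1)}\binom{m}{j}\lambda^{j+1}\mu^{m-j+1}.
\]
The key algebraic observation is that
\[
\frac{\binom{m}{j}}{(j+1)(m-j+1)} = \frac{m!}{(j+1)!(m+1-j)!} = \frac{1}{(m+1)(m+2)}\binom{m+2}{j+1},
\]
so, reindexing with $i=j+1$, the sum becomes
\[
\frac{1}{(m+1)(m+2)}\sum_{i=1}^{m+1}(-1)^{i-1}\binom{m+2}{i}\lambda^i\mu^{m+2-i}.
\]

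Finally I compare this with the binomial expansion $(\lambda-\mu)^{m+2} = \sum_{i=0}^{m+2}(-1)^{m+2-i}\binom{m+2}{i}\lambda^i\mu^{m+2-i}$. Since $m+2$ is odd, $(-1)^{m+2-i} = (-1)^{i-1}$, so the inner range $1\le i\le m+1$ of my sum matches $(\lambda-\mu)^{m+2}$ exactly; the missing endpoints $i=0$ and $i=m+2$ contribute $-\mu^{m+2}$ and $+\lambda^{m+2}$ respectively. Subtracting these off yields precisely $(\lambda-\mu)^{m+2} - \lambda^{m+2} + \mu^{m+2}$, and dividing by $(m+1)(m+2)$ gives the claimed formula. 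The only mildly delicate point is keeping track of parities of $m+2$ and the endpoint terms; everything else is bookkeeping with binomial coefficients.
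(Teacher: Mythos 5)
Your proof is correct and follows essentially the same route as the paper's: expand $w$ into monomials, use that $\beta_m(X^aY^b,X^cY^d)$ is supported on $a=d$, rewrite $\binom{m}{j}/\bigl((j+1)(m+1-j)\bigr)$ as $\binom{m+2}{j+1}/\bigl((m+1)(m+2)\bigr)$, and compare with the binomial expansion of $(\lambda-\mu)^{m+2}$ minus its two endpoint terms. The only cosmetic difference is that the paper absorbs the signs into powers of $(-\mu)$ while you track $(-1)^{i-1}$ explicitly and use the parity of $m+2$ at the end.
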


\begin{proof}
For the first part, as the coefficient of $Y^m$ in $w(\mu)$ is $\alpha$, we have
$$\beta_m(\lambda X^m , w(\mu)) = \beta_m(\lambda X^m,\mu Y^m) = - \lambda \mu.$$  For part (ii), we calculate
\begin{eqnarray*}
\beta_m(w(\lambda),w(\mu))&=& \beta_m(\sum_{j=0}^m \frac{\lambda^{j+1}}{(j+1)}\left( m \atop j\right ) X^jY^{m-j},
\sum_{j=0}^m \frac{\mu^{j+1}}{(j+1)}\left( m \atop j\right ) X^jY^{m-j})\\
&=& \sum_{j=0}^m \frac{\lambda^{j+1}}{(j+1)}\left( m \atop j\right )  \frac{\mu^{m-j+1}}{m-j+1}\left( m \atop m-j\right ) \frac{(-1)^j}{\left( m \atop j\right )}\\
&=&\sum_{j=0}^m \lambda^{j+1}(-\mu)^{m+2-(j+1)}\left( m \atop j\right )\frac {1}{(j+1)(m+2-(j+1))}\\
&=&\frac{1}{(m+1)(m+2)}\sum_{j=0}^m \lambda^{j+1}(-\mu)^{m+2-(j+1)}\left( m+2 \atop j+1\right ) \\
&=& \frac{(\lambda-\mu)^{m+2}- \lambda^{m+2}+ \mu ^{m+2}}{(m+1)(m+2)} .
\end{eqnarray*}
\end{proof}

\begin{lemma}\label{waaction}   Let $a,b,\lambda,\mu\in \mathbb F$ with $a$ and $b$ non-zero. Then
$$w(\lambda)^{(1,\left(\begin{smallmatrix}a&0\\\mu&b\end{smallmatrix}\right))}=\frac{b^{m+1}}{a}\left(w\left(\frac{a\lambda +\mu}{b}\right)- w\left(\frac{\mu}{b}\right)
\right).$$
\end{lemma}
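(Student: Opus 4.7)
The plan is to recognize $w(\alpha)$ as a formal antiderivative and reduce the computation to a chain-rule calculation. View $w(\alpha)$ as a polynomial in $\alpha$ with coefficients in $V_m$. I would first verify the two key properties
\[
\frac{d}{d\alpha}\, w(\alpha) \;=\; (\alpha X + Y)^m, \qquad w(0)=0.
\]
The second is immediate, and the first follows by term-wise differentiation of the defining sum, using that $m+1<p$ so the denominators $j+1$ are invertible in $\mathbb{F}$. Thus $w(\alpha)$ is characterized by being the antiderivative of $(\alpha X+Y)^m$ vanishing at $\alpha=0$.

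Next, I would unpack the action of $(1,A)$ for $A=\left(\begin{smallmatrix}a&0\\\mu&b\end{smallmatrix}\right)$: by the defining formula, $(1,A)$ acts on $V_m$ as the $\mathbb{F}$-linear substitution $X\mapsto aX$, $Y\mapsto \mu X+bY$. Setting
\[
g(\alpha) \;=\; w(\alpha)^{(1,A)},
\]
differentiation in $\alpha$ commutes with this (scalar) substitution on the coefficients, and so $g'(\alpha)=((a\alpha+\mu)X+bY)^m$. The main step is then to exhibit a second antiderivative of this expression in terms of $w$. Define
\[
F(\alpha) \;=\; \frac{b^{m+1}}{a}\, w\!\left(\frac{a\alpha+\mu}{b}\right).
\]
A straightforward chain-rule calculation gives
\[
F'(\alpha) \;=\; \frac{b^{m+1}}{a}\cdot\Bigl(\tfrac{a\alpha+\mu}{b}X+Y\Bigr)^{m}\cdot\frac{a}{b} \;=\; b^m\Bigl(\tfrac{a\alpha+\mu}{b}X+Y\Bigr)^{m} \;=\; \bigl((a\alpha+\mu)X+bY\bigr)^m,
\]
matching $g'(\alpha)$.

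Hence $g$ and $F$ differ by a constant element of $V_m$. Evaluating at $\alpha=0$ we have $g(0)=w(0)^{(1,A)}=0$ while $F(0)=\tfrac{b^{m+1}}{a}w(\mu/b)$, so the constant is $-\tfrac{b^{m+1}}{a}w(\mu/b)$. Substituting $\alpha=\lambda$ then yields the desired identity. The only nontrivial point is justifying the ``formal calculus'' in characteristic $p$, i.e.\ that term-wise differentiation and the chain rule for polynomial substitutions are valid here; both are routine once one notes that all relevant factorials appearing as denominators are bounded by $m+1<p$, and the non-vanishing of $a$ and $b$ ensures that $b^{m+1}/a$ and the arguments $\mu/b$, $(a\lambda+\mu)/b$ make sense.
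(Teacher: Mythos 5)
Your proof is correct, but it takes a genuinely different route from the paper's. The paper argues by direct computation: it expands $w(\lambda)^{(1,A)}$ as a double sum, extracts the coefficient of $X^eY^{m-e}$, reorganises the binomial coefficients, and then recognises the resulting inner sum as $\bigl(\frac{a\lambda+\mu}{b}\bigr)^{e+1}-\bigl(\frac{\mu}{b}\bigr)^{e+1}$ via the binomial theorem. You instead characterise $w(\alpha)$ as the antiderivative in $\alpha$ of $(\alpha X+Y)^m$ vanishing at $\alpha=0$, note that the linear, $\alpha$-independent action of $(1,A)$ commutes with $d/d\alpha$, and check that both sides of the claimed identity have the same $\alpha$-derivative and the same value at $\alpha=0$. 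This is shorter and has the added virtue of explaining where the otherwise unmotivated formula for $w$ comes from. The one step you should justify more precisely is the assertion that $g$ and $F$ differ by a constant: in characteristic $p$, equal derivatives do not in general force this (e.g.\ $\alpha^p$ has zero derivative), and the correct reason is not the invertibility of the factorial denominators you cite but the fact that $g$ and $F$ both have $\alpha$-degree $m+1<p$ (which holds since $m$ is odd and $p$ is odd, so $m\le p-2$; in the paper's setting $m=p-4$), and the only polynomials of degree less than $p$ with vanishing derivative are the constants. With that one sentence added, your argument is complete and is, if anything, cleaner than the computation in the paper.
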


\begin{proof}

\begin{eqnarray*}
w(\lambda)^{(1,\left(\begin{smallmatrix}a&0\\\mu&b\end{smallmatrix}\right))}&=& \left(\sum_{j=0}^m \frac{\lambda^{j+1}}{(j+1)}\left( m \atop j\right ) X^jY^{m-j}\right)^{(1,\left(\begin{smallmatrix}a&0\\\mu&b\end{smallmatrix}\right))}\\&=&  \sum_{j=0}^m \frac{\lambda^{j+1}}{(j+1)}\left( m \atop j\right ) (aX)^j(\mu X + b Y)^{m-j}\\
&=&  \sum_{j=0}^m \frac{\lambda^{j+1}}{(j+1)}\left( m \atop j\right ) (aX)^j\left( \sum_{k=0}^{m-j}\left( {m-j}\atop k\right)(\mu X)^k(bY)^{m-j-k}\right).
\end{eqnarray*}
We now determine the coefficient of $X^eY^{m-e}$:
\begin{eqnarray*}
\sum_{f=0}^e\frac{\lambda^{f+1}}{{f+1}} \left ( m \atop f \right )a^f\left({m-f}\atop{e-f}\right) \mu^{e-f}b^{m-e}&=& \sum_{f=0}^e \frac{\lambda^{f+1}a^f\mu^{e-f}b^{m-e}}{e+1}\left(m \atop e\right)\left({e+1}\atop {f+1}\right)\\
&=&\frac{b^{m-e}}{a(e+1)} \left (m \atop e\right) \sum_{f=0}^e  \lambda^{f+1} a^{f+1}\mu^{(e+1)-(f+1)}\left({e+1}\atop {f+1}\right)\\
&=& \frac{b^{m+1}}{a(e+1)} \left (m \atop e\right) \left( \left(\frac{a\lambda +\mu}{b}\right)^{e+1} - \left(\frac{\mu}{b}\right)^{e+1}\right).
\end{eqnarray*}
Therefore, $$w(\lambda)^{(1,\left(\begin{smallmatrix}a&0\\\mu&b\end{smallmatrix}\right))}=\frac{b^{m+1}}{a}\left(w\left(\frac{a\lambda +\mu}{b}\right)- w\left(\frac{\mu}{b}\right)
\right),$$ as claimed.

\end{proof}

One of the nice consequences of Lemma~\ref{waaction} is that $W_0$ is a subgroup of $S$.

By the discussion before Proposition~\ref{lem::amalgam}, we have $N_P(W_0)Q/Q $ is isomorphic to a subgroup of $$ \{(\frac{a}{b^{m+1}},\left(\begin{smallmatrix}a&0\\0&b \end{smallmatrix}\right))\mid a,b \in \mathbb F\}.$$

On the other hand, Lemma~\ref{waaction} shows that elements of the form $(\frac{a}{b^{m+1}},\left(\begin{smallmatrix}a&0\\0&b \end{smallmatrix}\right)) (0,0)$ normalize $W_0$.

\begin{lemma}\label{norm} We have
$$N_P(W_0) = \left\{(\frac{a}{b^{m+1}},\left(\begin{smallmatrix}a&0\\b\lambda&b \end{smallmatrix}\right))(w(\lambda)+ \tau X^m, \theta)\mid a, b \in \mathbb F^\times, \lambda, \tau, \theta \in \mathbb F\right\}.$$
\end{lemma}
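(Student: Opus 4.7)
The plan is to bracket $N_P(W_0)$ from above and below and then compare cardinalities.

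For the upper bound I would first show $N_P(W_0) \le B$. Since the $L$-parts of the generators of $W_0$ range over $S_0$, we have $W_0 Q = S$, so any normalizer of $W_0$ normalizes $S$ and hence lies in $B = N_P(S) = B_0 Q$. Next I would identify $N_P(W_0) \cap Q$. Conjugating a generator $(1,L_\gamma)(w(\gamma),\delta)$ by $q \in Q$ preserves the $L$-part $\ell_\gamma := (1,L_\gamma)$; requiring the $Q$-part to remain in $W_0$ reduces, modulo $Z(Q)$, to $q^{\ell_\gamma} \equiv q$ for every $\gamma \in \mathbb F$, so $q Z(Q) \in C_{Q/Z(Q)}(S_0)$. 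Lemma~\ref{lem::centralizers2}(iii) then forces $q \in R$. Finally, $N_P(W_0)Q/Q$ is a subgroup of $B_0$; every element of $B_0$ factors uniquely as a diagonal times an element of $S_0$, and the analysis preceding Proposition~\ref{lem::amalgam} (which for diagonal elements is an application of Lemma~\ref{waaction} combined with the fact that the $V_m$-part of a conjugated generator must again be of the form $w(\lambda')$) forces the diagonal factor of any element of $N_P(W_0)Q/Q$ to lie in $\{(\frac{a}{b^{m+1}},\left(\begin{smallmatrix}a&0\\0&b\end{smallmatrix}\right))\}$. Hence $N_P(W_0)Q/Q$ sits inside $B_0^\ast := \{(\frac{a}{b^{m+1}},\left(\begin{smallmatrix}a&0\\b\lambda&b\end{smallmatrix}\right)) \mid a,b \in \mathbb F^\times,\ \lambda \in \mathbb F\}$, of order $(p-1)^2 p$, giving $|N_P(W_0)| \le (p-1)^2 p^3$.

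For the lower bound I would verify that each element of the claimed set normalizes $W_0$ by decomposing it as $d \cdot w' \cdot r$, where $d = (\frac{a}{b^{m+1}},\left(\begin{smallmatrix}a&0\\0&b\end{smallmatrix}\right))$ is diagonal, $w' = (1,L_\lambda)(w(\lambda),\theta_1) \in W_0$, and $r = (\tau X^m,\theta_2) \in R$. Lemma~\ref{waaction} (specialized to $\mu = 0$) shows that $d$ normalizes $W_0$; $w'$ does so trivially; and $r$ does too, by a short calculation using that $L_\gamma$ fixes $X^m$ together with Lemma~\ref{formwawb}(i) (conjugation by $r$ only shifts the central coordinate by $-2\tau\gamma$). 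Multiplying $w'$ by $r$ in $Q$ produces the $Q$-factor $(w(\lambda)+\tau X^m,\ \theta_1+\theta_2+\lambda\tau)$ via Lemma~\ref{formwawb}(i), and setting $\theta := \theta_1+\theta_2+\lambda\tau$ delivers the stated form. The parameterization by $(a,b,\lambda,\tau,\theta) \in (\mathbb F^\times)^2 \times \mathbb F^3$ is plainly injective, so the claimed set contains exactly $(p-1)^2 p^3$ elements.

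Comparing the two bounds forces equality. The main obstacle is reconciling the splitting: the natural representative $(1,L_\lambda) \in B_0^\ast$ does not itself normalize $W_0$, so it must be lifted to the $W_0$-element $(1,L_\lambda)(w(\lambda),\theta_1)$, and this is what forces the same parameter $\lambda$ to appear simultaneously in the $L$- and $V_m$-components of the claimed form; the bilinear-form evaluation $\beta_m(w(\lambda),\tau X^m) = \lambda\tau$ from Lemma~\ref{formwawb}(i) then accounts for the correction absorbed into the central coordinate $\theta$, which is what makes the compact stated formula emerge.
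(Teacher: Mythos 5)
Your argument is correct and follows the same route the paper intends: the paper states this lemma with no written proof, but the two sentences surrounding it (the containment of the torus part of $N_P(W_0)Q/Q$ in $\{(\frac{a}{b^{m+1}},\left(\begin{smallmatrix}a&0\\0&b\end{smallmatrix}\right))\}$ derived from the pre-Proposition~\ref{lem::amalgam} discussion, plus Lemma~\ref{waaction} for the reverse inclusion) describe exactly your upper-bound/lower-bound sandwich. Your write-up in fact tightens the paper's phrasing slightly, since $N_P(W_0)Q/Q$ contains $S_0$ and so cannot literally embed in that torus; splitting off the unipotent factor and counting, as you do, is the clean way to finish.
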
\qed

In a moment we shall write down a homomorphism from $N_P(W_0)$ onto $C$. To check that this is a homomorphism the following remark is helpful. (Note that it uses $p=m+4$.)

\begin{lemma}\label{prod}
Suppose that $$x=(\frac{a}{b^{m+1}},\left(\begin{smallmatrix}a&0\\b\lambda&b \end{smallmatrix}\right))(w(\lambda)+ \tau X^m, \theta)$$ and $$y = (\frac{c}{d^{m+1}},\left(\begin{smallmatrix}c&0\\d\mu &d \end{smallmatrix}\right))(w(\mu)+ \sigma X^m, \phi)$$ are elements of $N_P(W)$. Then \begin{eqnarray*}xy&=& (\frac{ac}{(bd)^{m+1}},\left(\begin{smallmatrix}ac&0\\cb \lambda +bd\mu &bd \end{smallmatrix}\right))(w(\frac{cb \lambda +bd\mu}{bd})+ (\frac{c^{m+1}}{d^{m+1}} \tau +\sigma) X^m, \\&&\;\;\;\;\;\; \frac{c^{m+2}}{d^{m+2}}\theta+\phi+
\left(
\frac
{(\frac{c \lambda}{d})^{m+2} - (\frac{c \lambda}{d}+\mu)^{m+2}+ \mu ^{m+2}}
{(m+1)(m+2)}\right)-\frac{c^{m+1}}{d^{m+1}}}{\tau  \mu + \frac{c\lambda \sigma}{d}).
\end{eqnarray*}
\end{lemma}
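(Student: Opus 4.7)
The plan is to recognize $xy$ as a product in the semidirect product $P=LQ$, factor each element as $\ell q$ with $\ell\in L$ and $q\in Q$, and use the identity $\ell_1 q_1\ell_2 q_2=(\ell_1\ell_2)\,(q_1^{\ell_2})\,q_2$. Then the right-hand side splits into three routine computations whose outputs get assembled.

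First I would compute the $L$-part $\ell_1\ell_2$. The second coordinates multiply as lower-triangular $2\times 2$ matrices, giving $\left(\begin{smallmatrix} ac & 0\\ cb\lambda+bd\mu & bd\end{smallmatrix}\right)$; and the first coordinates multiply in $\mathbb F^\times$ to give $\tfrac{ac}{(bd)^{m+1}}$. This already yields the $L$-factor written in the statement and also identifies the ratio $(cb\lambda+bd\mu)/(bd)=c\lambda/d+\mu$ which will parameterise the $w(\cdot)$ term coming from the action.

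Next I would conjugate $q_1=(w(\lambda)+\tau X^m,\theta)$ by $\ell_2$. The $V_m$-component decomposes linearly into the effect of $\ell_2$ on $w(\lambda)$ (given by Lemma~\ref{waaction} with $a'=c$, $b'=d$, $\mu'=d\mu$, which after cancellation with the scalar $c/d^{m+1}$ produces $w(c\lambda/d+\mu)-w(\mu)$) and the effect on $\tau X^m$ (the matrix scales $X^m$ by $c^m$ and the outer scalar contributes $c/d^{m+1}$, so $\tau X^m\mapsto\tfrac{c^{m+1}}{d^{m+1}}\tau X^m$). The $\mathbb F^+$-coordinate scales by $t^2(\det A)^m=\tfrac{c^{m+2}}{d^{m+2}}$.

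Finally I would multiply $q_1^{\ell_2}$ by $q_2=(w(\mu)+\sigma X^m,\phi)$ using $(v,y)(w,z)=(v+w,y+z+\beta_m(v,w))$. The $V_m$-components add: the two $\pm w(\mu)$ terms cancel, leaving $w(c\lambda/d+\mu)+\bigl(\tfrac{c^{m+1}}{d^{m+1}}\tau+\sigma\bigr)X^m$, which matches the asserted $Q$-part. For the scalar part I would expand $\beta_m(q_1^{\ell_2}\text{-vector},q_2\text{-vector})$ bilinearly; the $\beta_m(X^m,X^m)=0$ and $\beta_m(w(\mu),w(\mu))=0$ terms vanish (the latter by alternativity), the cross terms with $X^m$ evaluate via Lemma~\ref{formwawb}(i) to give $\sigma c\lambda/d$ and $-\tfrac{c^{m+1}}{d^{m+1}}\tau\mu$ (with a cancelling pair $\pm\sigma\mu$), and the pure $w$-term evaluates by Lemma~\ref{formwawb}(ii) with the two arguments differing by $c\lambda/d$, giving exactly the fraction $\bigl((c\lambda/d)^{m+2}-(c\lambda/d+\mu)^{m+2}+\mu^{m+2}\bigr)/((m+1)(m+2))$ in the statement.

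The only genuine obstacle is bookkeeping: keeping the signs in $\beta_m$ straight under anti-symmetry and making sure the scalar $c/d^{m+1}$ coming from the $L$-action cancels correctly against the $d^{m+1}/c$ prefactor produced by Lemma~\ref{waaction}, so that the two stray $w(\mu)$ terms annihilate and leave a single clean $w$-argument. Once that cancellation is verified, the rest is direct substitution.
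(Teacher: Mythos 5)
Your proposal is correct and is exactly the computation the paper intends: the paper omits the proof of Lemma~\ref{prod} entirely (marking it with a \qed as a routine verification), and the intended route is precisely your decomposition $\ell_1q_1\ell_2q_2=(\ell_1\ell_2)(q_1^{\ell_2})q_2$ in the semidirect product, with Lemma~\ref{waaction} handling the conjugation and Lemma~\ref{formwawb} handling the $\beta_m$ cross-terms. All the scalars, signs and cancellations you describe check out against the stated formula.
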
\qed

Now a straightforward calculation using Lemma~\ref {prod} shows that the  map $\Theta$ defined by
$$(ab^{-m-1},\left(\begin{smallmatrix}a&0\\\lambda&b \end{smallmatrix}\right))(w(\lambda)+ \tau X^m, \theta)\mapsto
\left(\begin{smallmatrix} 1&0&0\\\frac{b}{a}\lambda&\frac{b}{a}&0\\
\frac{a}{b}(\theta + \frac{\lambda^{m+2}}{(m+1)(m+2)}-\lambda \tau)&-2\frac{a}{b} \tau&\frac{a}{b}\end{smallmatrix}\right)$$
is a surjective homomorphism from $N_P(W_0)$ to $C$ with kernel $C_L(Q)$.

Combining the above discussion with Proposition~\ref{lem::amalgam} yields
\begin{theorem}\label{thm::amalgam}  Suppose that $p$ is an odd prime and $m\le p-1$ is also odd. Set  $P= P(m,\mathbb F)= LQ$.
Then $P/C_L(Q)$ has a $p$-subgroup $W_0$ such that $N_{P/Z(P)}(W_0Z(P)/Z(P))$ is isomorphic to $C$ in such a way that $W_0$ maps to $W$ if and only if $\mathbb F$ has prime order $p$ and  $p=m+4$.
\end{theorem}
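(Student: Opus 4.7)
The statement is a biconditional that assembles Proposition~\ref{lem::amalgam} with the explicit construction carried out in the paragraphs immediately following it, so my plan treats the two directions separately.

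The \emph{only if} direction is just Proposition~\ref{lem::amalgam}, which is already in hand. The two scaling constraints~(\ref{eq1}) and~(\ref{eq2}), extracted from the requirement that a diagonal element of $B_0$ normalising a hypothetical $W_0$ must scale $Y^m$ and $X^m$ compatibly with the prescribed diagonal of $C$, combine into $\theta^{-(m+3)}=1$ for every $\theta\in\mathbb{F}^\times$. This forces $\mathbb{F}=\mathbb{F}_p$ with $p-1\mid m+3$, and since $m$ is odd the only solutions are $p=m+4$ and the sporadic exception $p=3$, $m=1$, the latter having been eliminated in the excerpt by the Sylow $3$-subgroup of $\Alt(9)$ computation.

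For the \emph{if} direction, assume $|\mathbb{F}|=p$ and $p=m+4$. I would take the candidate
$$W_0 = \langle (1,\left(\begin{smallmatrix}1&0\\\gamma&1\end{smallmatrix}\right))(w(\gamma),\delta) \mid \gamma,\delta\in\mathbb{F}\rangle$$
displayed in the paper and first check it really is a subgroup of $S$: multiplying two generators yields another element of the same form, thanks to Lemma~\ref{waaction}, which translates $w(\gamma_1)$ to $w(\gamma_1+\gamma_2)$ modulo $\langle X^m\rangle\cdot Z(Q)$. Then I would pin down $N_P(W_0)$. The analysis preceding Proposition~\ref{lem::amalgam} forces the image of $N_P(W_0)$ in $P/Q$ to lie inside the diagonal subgroup $\{(ab^{-m-1},\left(\begin{smallmatrix}a&0\\0&b\end{smallmatrix}\right))\}$; Lemma~\ref{waaction} gives the reverse containment; combining this with the determination of which elements of $Q$ normalise $W_0$ yields the parametrization of Lemma~\ref{norm}.

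The main obstacle is verifying that the explicit map $\Theta\colon N_P(W_0)\to C$ displayed before the theorem is a surjective homomorphism with kernel $C_L(Q)$. Well-definedness and surjectivity follow immediately from the parametrization of Lemma~\ref{norm} (every diagonal and unipotent generator of $C$ is visibly hit), and the kernel calculation reduces to setting $a=b$, $\lambda=\tau=0$, $\theta=0$, which returns exactly $C_L(Q)$ by Lemma~\ref{lem::centralizers2}(i). The homomorphism property is the real work: one matches the product formula for $xy\in N_P(W_0)$ in Lemma~\ref{prod} against the $\SL_3(\mathbb{F})$ product $\Theta(x)\Theta(y)$. Every coordinate except the $(3,1)$-entry of $\Theta(xy)$ is a routine substitution; the $(3,1)$-entry is delicate because the commutator contribution from the group law in $Q$ appears as the $\beta_m(w(\cdot),w(\cdot))$ term of Lemma~\ref{formwawb}(ii). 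The hypothesis $p=m+4$ is exactly what is needed here, since it ensures that the denominator $(m+1)(m+2)=(p-3)(p-2)$ is invertible in $\mathbb{F}$ and that the powers of $m+2$ appearing in the binomial expansion collapse correctly modulo $p$. Once this single identity is checked, Proposition~\ref{lem::amalgam} and the construction together give the biconditional.
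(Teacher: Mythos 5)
Your proposal follows the paper's proof essentially verbatim: the ``only if'' direction is Proposition~\ref{lem::amalgam} (obtained from Equations~(\ref{eq1}) and~(\ref{eq2}) together with the elimination of the case $p=3$, $m=1$), and the ``if'' direction is the paper's explicit construction of $W_0$, the determination of $N_P(W_0)$ in Lemma~\ref{norm}, and the verification via Lemmas~\ref{formwawb} and~\ref{prod} that $\Theta$ is a surjective homomorphism onto $C$ with kernel $C_L(Q)$. One minor quibble: invertibility of $(m+1)(m+2)$ in $\mathbb F$ only requires $m\le p-3$, so that is not where the hypothesis $p=m+4$ actually bites in the deferred computation, but this does not affect the correctness or the structure of your argument.
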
\qed

Using the homomorphism $\Theta$ we can build the free amalgamated product $$G = P/C_L(Q) \ast_C K.$$

\section{The fusion systems}

Saturated fusion systems  were designed by Puig to capture the $p$-local properties of defect groups of $p$-blocks in representation theory.
 Given a group $X$ and  $p$-subgroup $T$, the  fusion system ${\mathcal F}_T(X)$  is a category with objects the subgroups of $T$ and, for objects $P$ and $Q$, the morphisms from $P$ to $Q$, $\Hom_{{\mathcal F}_T(X)}(P,Q)$, are the conjugation maps $c_x$ where $x \in X $ and $P^x \le Q$.  If $X$ is a finite group and $T \in \syl_p(X)$, then ${\mathcal F}_T(X)$ is saturated. An exotic fusion system is a saturated fusion system which is not ${\mathcal F}_T(X)$ for any finite group $X$ with $T \in \syl_p(X)$.
 For an extensive introduction to fusion systems  we recommend Craven's book \cite{Craven}.

In this section we construct a fusion system from the free amalgamated product   $G = P/C_L(Q) \ast_C K$. Set $P_1 = P/C_L(Q)$ and  identify $P_1$ and $K$ with their images in $G$, set $C = P_1 \cap K$ and $D = O_p(C)$. We identify subgroups of $S$ with their images in the quotient $P_1$.  Thus  $D=S \cap K$, $Q= O_p(P_1)$ and $W= O_p(K)$.
 We intend to show that ${\mathcal F} = {\mathcal F}_S(G)$ is an exotic  saturated fusion system. We see that ${\mathcal F}$ contains two sub-fusion systems ${\mathcal F}_S(P_1)$, ${\mathcal F}_{D}(K)$  and since $P_1$ and $K$  are finite groups with Sylow $p$-subgroups $S$ and $D$ respectively, these fusion systems are saturated.  A finite $p$-subgroup $T$ of an infinite group  $X$ is a Sylow $p$-subgroup of $X$ provided every finite $p$-subgroup of $X$ is $X$-conjugate to a subgroup of $T$.

\begin{lemma}\label{GR} We have that $S$ is a Sylow $p$-subgroup of $G$ and $${\mathcal F} = \langle {\mathcal F}_S(P_1), {\mathcal F}_{D}(K) \rangle$$  is the smallest fusion system on $S$ which contains  both ${\mathcal F}_S(P_1)$ and  ${\mathcal F}_{D}(K)$. \end{lemma}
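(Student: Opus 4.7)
The plan is to pass to the Bass-Serre tree $\mathcal{T}$ associated with the amalgamated product $G = P_1 \ast_C K$, on which $G$ acts without edge inversions with two orbits of vertices (stabilized by the two factors) and a single orbit of edges (stabilized by $C$); write $v_{P_1}$ and $v_K$ for the two base vertices.

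For the Sylow claim, I first note that $|C_L(Q)| = p-1$ is coprime to $p$, so the image $S$ of $S_0 Q$ in $P_1 = P/C_L(Q)$ is a Sylow $p$-subgroup of $P_1$; an order count gives $D \in \Syl_p(K)$, and by construction $D = S \cap K \leq S$, with $D$ moreover Sylow in $C$. Serre's theorem on finite group actions on trees guarantees that every finite subgroup of $G$ fixes a vertex of $\mathcal{T}$, and hence is contained in some $G$-conjugate of $P_1$ or of $K$. Applying Sylow's theorem inside these finite factors then shows that every finite $p$-subgroup of $G$ is $G$-conjugate to a subgroup of $S$, giving $S \in \Syl_p(G)$ in the sense specified in the preamble.

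For the fusion identity, set $\mathcal{F}' = \langle \mathcal{F}_S(P_1), \mathcal{F}_D(K) \rangle$, for which only the inclusion $\mathcal{F} \subseteq \mathcal{F}'$ needs argument. Given $P \leq S$ and $x \in G$ with $P^x \leq S$, both $P$ and $P^x$ lie in $P_1$, so $P$ fixes the two vertices $v_{P_1}$ and $x v_{P_1}$, hence fixes pointwise the geodesic $\gamma$ of length $n$ joining them; this integer $n$ is the natural induction variable. The base case $n = 0$ forces $x \in P_1$ and $c_x \in \mathcal{F}_S(P_1)$. For $n \geq 1$ the first edge of $\gamma$ has stabilizer $gCg^{-1}$ with $g \in P_1$ and contains $P$, so $g^{-1}Pg$ is a $p$-subgroup of $C$; by Sylow's theorem in $C$ there exists $c \in C$ with $c^{-1}g^{-1}Pgc \leq D$. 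Setting $h = gc \in P_1$, the map $c_h$ sends $P$ into $D \leq S$ and so is a morphism in $\mathcal{F}_S(P_1)$; writing $x = h \cdot (h^{-1}x)$ gives $c_x = c_{h^{-1}x} \circ c_h$. Since $h^{-1} v_1 = c^{-1} v_K = v_K$, the shifted geodesic from $v_K$ to $h^{-1}xv_{P_1}$ has length $n-1$, and the subgroup $h^{-1}Ph \leq D$ is a $p$-subgroup of $K$ on which $h^{-1}x$ acts by conjugation into $P^x$. Repeating the argument with the roles of $P_1$ and $K$ interchanged closes the induction.

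The main obstacle is the bookkeeping ensuring that every intermediate image produced during the peeling-off process lies in $S$ (or in $D$); this is precisely where one requires $D$ to be a Sylow $p$-subgroup of the edge stabilizer $C$, since the Sylow-transport elements $c \in C$ used in each step belong to both $P_1$ and $K$, so that the corrections they supply are simultaneously morphisms in $\mathcal{F}_S(P_1)$ and $\mathcal{F}_D(K)$ and therefore do not leave $\mathcal{F}'$.
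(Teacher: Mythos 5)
Your proof is correct, but it takes a different route from the paper: the paper disposes of this lemma in one line by citing Robinson's Theorem~1 from \cite{GR}, which is a general statement about the fusion system of a free amalgamated product $A\ast_C B$ under exactly the Sylow hypotheses you verify ($S\in\Syl_p(P_1)$, $D=S\cap K\in\Syl_p(K)$ and $D\in\Syl_p(C)$). What you have written is, in effect, a self-contained reproof of that theorem via the Bass--Serre tree: the fixed-point theorem for finite groups acting on trees gives the Sylow statement, and the induction on the length of the geodesic from $v_{P_1}$ to $xv_{P_1}$, peeling off one edge at a time and using Sylow's theorem in the edge group $C$ to push the conjugate back into $D$, gives the generation statement. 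The argument is sound; the only blemishes are notational (your ``$h^{-1}v_1=c^{-1}v_K=v_K$'' should refer to the image of the second vertex $gv_K$ of the geodesic, and since the tree is bipartite the distance $n$ is automatically even, so the induction really descends in steps of two, alternating a $P_1$-step with a $K$-step). The trade-off is the usual one: the citation is shorter, while your direct argument makes explicit precisely which Sylow conditions on $C$, $P_1$ and $K$ are needed and why the transport elements $c\in C$ lie in both factors and hence never leave $\langle\mathcal F_S(P_1),\mathcal F_D(K)\rangle$.
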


\begin{proof} This follows with \cite[Theorem 1]{GR}.
\end{proof}

 The free amalgamated product $G$ determines a  graph  $\Gamma$. This graph has vertices all the cosets of $P_1$ in $G$ and all the cosets of $K$ in $G$. Two vertices are adjacent precisely when they have non-empty intersection. By \cite[Theorem 7]{Serre}, $\Gamma$ is a tree and, by construction, $G$ acts transitively on the edges of $\Gamma$ and has two orbits on the vertices of $\Gamma$. Moreover, $\Gamma$ is bi-partite.  We let $\alpha = P_1$ and $\beta = K$ (vertices of $\Gamma)$ and note that $(\alpha,\beta)$ is an edge. For $\gamma \in \Gamma$, $\Gamma(\gamma)$ denotes the set of neighbours of $\gamma$ in $\Gamma$.  The stabiliser $G_\gamma$ of $\gamma$ in $G$ is $G$-conjugate to either $P_1$ or $K$ and, especially, $G_\alpha= P_1$ and $G_\beta = K$.  Finally, we note that $G_\gamma$ operates transitively on $\Gamma(\gamma)$. For a subgroup $X$ of $G$  denote by $\Gamma^X$ the subgraph of $\Gamma$ fixed vertex wise by  $X$. Since $\Gamma$ is a tree, so is $\Gamma^X$.

In the proof of part (iii) of the next lemma, we need to consider centric subgroups of $\mathcal F$. These are subgroups $T$ of $S$ such that $C_S(T\alpha)= Z(T\alpha)$ for all $\alpha \in \Hom_{\mathcal F_S(G)}(T,S)$. Note that centric subgroups have order at least $p^2$.

\begin{lemma}\label{sat} If $X \le S$ has order at least $p^2$, then $\Gamma^X$ is finite. Furthermore,
\begin{enumerate}
\item $X$ does not fix a path of length $5$ with middle vertex  a coset of $P_1$.
\item $N_G(X)$ is conjugate to a subgroup of either $P_1$ or $K$; and
\item ${\mathcal F} $  is a saturated fusion system.
\end{enumerate}
\end{lemma}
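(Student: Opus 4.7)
The plan is to exploit Bass--Serre theory on the tree $\Gamma$ together with the explicit $p$-local structure of $P_1$ and $K$ established above. I would prove the substantive assertion (i) first by a direct computation inside the amalgam, deduce finiteness of $\Gamma^X$ and part (ii) from (i) via standard tree arguments, and derive (iii) from a saturation criterion in light of (ii).

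For (i), I would argue by contradiction. After translating by $G$, assume that $X \le S$ with $|X| \ge p^2$ fixes a path of length $5$ whose middle vertex is $\alpha$ and which contains the edge $(\alpha,\beta)$. The path then contains an $\alpha$-neighbour $g\beta$ distinct from $\beta$ (with $g \in P_1 \setminus C$) and a $\beta$-neighbour $h\alpha$ distinct from $\alpha$ (with $h\in K\setminus C$), together with one more vertex at distance $2$ on one side. From the edges $(\alpha,\beta)$ and $(\alpha,g\beta)$ we obtain $X \le C \cap gCg^{-1}$ inside $P_1$. Recalling from Lemma~\ref{norm} that $C$ is the full normalizer of $W_0$ in $P_1$, and using Lemma~\ref{waaction} for the action of $C$ on $Q$, the key step is to show that $\langle W_0, gW_0 g^{-1}\rangle$ has a normalizer in $P_1$ whose Sylow $p$-subgroup has order at most $p$; the remaining constraints coming from the $\beta$-side then kill the $p'$-part, contradicting $|X| \ge p^2$. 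This combinatorial calculation, which requires carefully combining Lemmas~\ref{norm} and~\ref{waaction}, is the main obstacle of the proof.

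For finiteness of $\Gamma^X$ and (ii), note that $\Gamma$ is locally finite (both $P_1$ and $K$ are finite), so an infinite $\Gamma^X$ would, by K\"onig's lemma, contain an infinite ray and hence a path of length $5$ of the type forbidden by (i). Thus $\Gamma^X$ is a finite subtree of $\Gamma$. Every group of graph automorphisms of a finite tree fixes its centre, which is either a vertex or an edge; since $N_G(X)$ permutes $\Gamma^X$, it fixes this centre and is therefore contained in a vertex-stabiliser (a conjugate of $P_1$ or $K$) or in an edge-stabiliser (a conjugate of $C$, hence a subgroup of both $P_1$ and $K$). This gives (ii).

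For (iii), by Alperin's fusion theorem the saturation axioms need only be verified on $\mathcal{F}$-centric subgroups. If $T \le S$ is $\mathcal{F}$-centric, then $|T| \ge p^2$, so by (ii) the normalizer $N_G(T)$ is conjugate to a subgroup of $P_1$ or $K$; in particular $N_G(T)$ is finite and, after conjugation, $N_S(T)$ is a Sylow $p$-subgroup of $N_G(T)$, giving the Sylow axiom. The extension axiom then follows from Lemma~\ref{GR}, since every morphism in $\mathcal{F}$ is a composite of morphisms coming from the two saturated subsystems $\mathcal{F}_S(P_1)$ and $\mathcal{F}_D(K)$, and the local information from (ii) ensures that the relevant extensions in $G$ exist.
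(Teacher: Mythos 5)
Your treatment of finiteness and of part (ii) is sound and essentially matches the paper: bounded diameter of $\Gamma^X$ follows from (i), and $N_G(X)$ stabilises the finite subtree $\Gamma^X$ and hence fixes its centre (the paper instead cites \cite[Corollary, page 20]{Serre} to get a fixed vertex, but the two arguments are equivalent). The problem is part (i), the substantive content of the lemma, whose key step you explicitly leave as ``the main obstacle'': you propose to show that $N_{P_1}(\langle W_0, W_0^g\rangle)$ has Sylow $p$-subgroup of order at most $p$ by a computation combining Lemmas~\ref{norm} and~\ref{waaction}, but you neither carry this out nor justify that it is true, and it is not the route the paper takes. The paper's argument requires no computation in $P_1$ at all: after conjugating so that $\alpha_2=\beta$ and $\alpha_3=\alpha$, the group $X$ fixes two distinct neighbours $\alpha_1$ and $\alpha_3$ of the $K$-vertex $\beta$, hence lies in two distinct conjugates of $D$ in $K$; since any two distinct Sylow $p$-subgroups of $K$ meet exactly in $W=O_p(K)$ (because $K/W\cong\SL_2(p)$ has TI Sylow $p$-subgroups) and $|X|\ge p^2=|W|$, this forces $X=W$. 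The same argument at the other $K$-vertex gives $X=O_p(G_{\alpha_4})$, and then transitivity of $G_\alpha=P_1$ on $\Gamma(\alpha)$ together with $N_{P_1}(W)=C\le K$ forces $\alpha_4=\beta$, a contradiction. Until you either execute your $P_1$-side computation or switch to this $K$-side argument, (i) is unproven.

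Part (iii) also has a gap. Alperin's fusion theorem is a consequence of saturation, not a device for reducing the verification of the saturation axioms to centric subgroups, and Lemma~\ref{GR} only asserts that $\mathcal F$ is generated by $\mathcal F_S(P_1)$ and $\mathcal F_D(K)$ --- it gives no control over extensions of morphisms, so the claim that ``the local information from (ii) ensures that the relevant extensions in $G$ exist'' is not an argument. The paper deduces (iii) from the finiteness of $\Gamma^X$ and part (ii) by citing \cite[Corollary 3.4]{CP}, a saturation criterion designed precisely for fusion systems arising from free amalgamated products acting on trees; some such criterion, or an honest verification of both saturation axioms, is needed here.
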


\begin{proof}  Assume that $X$ is a $p$-subgroup of $G$ of order at least $p^2$.
Assume the path $\pi=(\alpha_1,\alpha_2,\alpha_3,\alpha_4, \alpha_5)$ of length $5$ is in $\Gamma^X$ with $\alpha_3$ a coset of $P_1$.  Then, as $G$ acts edge transitively on $\Gamma$, we may conjugate $X$ and the path $\pi$ so that $\alpha_2= \beta$ and $\alpha_3=\alpha$.  Then $X \le G_\alpha \cap G_\beta = P_1 \cap K= C$. Thus $X \le D$ as $X$ is a $p$-group.  Now $X$ also fixes $\alpha_1$  and so, as any two Sylow $p$-subgroups of $K$ intersect in $W$ and $X$ has order at least $p^2$, we have that $W= X$.  Similarly, $X= O_p(G_{\alpha_4})$ and consequently $X= W=O_p(G_\beta)= O_p(G_{\alpha_4})$.  Now $G_\alpha$ acts transitively on $\Gamma(\alpha)$ and so there exists $g \in G$ so that $G_{\alpha_4}^g = G_\beta$.  But then $g$ normalizes $W$. Since, by Lemma~\ref{norm},  $G_\beta =K\ge C=N_{P_1}(W)=N_{G_{\alpha}}(W)$, we see that $\beta= \alpha_4$ and this is a contradiction. This proves (i).  Using (i), we see that $X$ fixes no paths of length $6$ and so $|\Gamma^X|$ is finite.  In particular,  if $X$ is a centric subgroup in ${\mathcal F}$  then $|\Gamma^X|$ is finite.

Since $\Gamma^X$ is finite and $\Gamma$ is bipartite, we now see with \cite[Corollary, page 20]{Serre} that $N_G(X)$ fixes a vertex of $\Gamma$.  This proves (ii).

Finally,  application of \cite[Corollary 3.4]{CP} yields that ${\mathcal F}$ is saturated and this is (iii).
\end{proof}

A subgroup $X$ of $S$ is  fully $\mathcal F$-normalized provided $|N_S(X)|\ge |N_S(X\alpha)|$ for all $\alpha \in \Hom_{\mathcal F}(X,S)$.

\begin{lemma}\label{wcl}
We have
  $W$ is centric and fully $\mathcal F$-normalized.
\end{lemma}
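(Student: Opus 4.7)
The statement has two parts: (a) $W$ is fully $\mathcal F$-normalized, and (b) $W$ is $\mathcal F$-centric. The strategy is first to identify $N_S(W)$, then to bound the $p$-part of $N_G(W)$ using the amalgam, and finally to combine the faithful action of $\SL_2(\mathbb F)$ on the natural module $W$ with saturation to obtain the centric property.

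First I would compute $N_S(W)$. Since $W = O_p(K) \trianglelefteq K$ and $D = S \cap K$, we have $D \le N_S(W)$. In the other direction, Lemma~\ref{norm} (through the surjective homomorphism $\Theta$) gives $N_{P_1}(W) = C$, so $N_S(W) = N_{P_1}(W) \cap S = C \cap S$. Since $D$ is a Sylow $p$-subgroup of $C$ contained in $S$, we get $C \cap S = D$, whence $N_S(W) = D$ of order $p^3$.

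Next I would argue that $N_S(W) \in \syl_p(N_G(W))$; full $\mathcal F$-normalization then follows because for any $\mathcal F$-conjugate $W' = W^g \le S$ one has $|N_S(W')| \le |N_G(W')|_p = |N_G(W)|_p = |N_S(W)|$. By Lemma~\ref{sat}(ii), $N_G(W)$ is $G$-conjugate to a subgroup of either $P_1$ or $K$. Conjugation into $K$ is harmless, yielding $|N_G(W)|_p \le |K|_p = p^3$. To rule out conjugation into $P_1$, I would use the Bass--Serre tree: since $K \le N_G(W)$, such a conjugation would give $K^g \le P_1$, so $K^g = G_{g^{-1}\beta}$ would fix both $g^{-1}\beta$ and $\alpha$, hence the entire geodesic joining them and in particular some edge incident to $\alpha$. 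As every edge-stabilizer is a conjugate of $C$, this would force $|K| = |K^g| \le |C| = p^3(p-1)$, contradicting $|K| = p^3(p^2-1)$.

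Finally, for the centric property I would first compute $C_S(W) = W$. Since $K = W \rtimes \SL_2(\mathbb F)$ with $\SL_2(\mathbb F)$ acting faithfully on $W$ as its natural module, $C_K(W) = W$; combined with $C_S(W) \le N_S(W) = D \le K$ this yields $C_S(W) = W$. Now $\mathcal F$ is saturated by Lemma~\ref{sat}(iii), and being fully $\mathcal F$-normalized $W$ is therefore also fully $\mathcal F$-centralized, so $|C_S(W')| \le |C_S(W)| = |W| = |W'|$ for every $\mathcal F$-conjugate $W' \le S$. As $W'$ is abelian, $W' \le C_S(W')$, and equality of orders forces $C_S(W') = W' = Z(W')$, as required. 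The main obstacle is the tree-theoretic step ruling out $K^g \le P_1$, which genuinely uses the vertex- and edge-stabilizer structure of the amalgam; the remaining arguments are short once $N_S(W) = D$ has been pinned down.
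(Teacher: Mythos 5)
Your proof is correct, but it takes a genuinely different route from the paper's. The paper argues by contradiction directly from Lemma~\ref{sat}(i): if some $\mathcal F$-conjugate $W^g\le S$ had $|N_S(W^g)|>p^3$, then $N_S(W^g)$ would fix only the vertex $P_1$, and translating the fixed vertex $Kg$ by elements of $N_S(W^g)$ produces a fixed path of length $5$ with middle vertex a coset of $P_1$, which Lemma~\ref{sat}(i) forbids; the centric property is then asserted from $|N_S(W)|=p^3$ with essentially no further detail. You instead invoke Lemma~\ref{sat}(ii) to see that $N_G(W)$ is finite and $G$-conjugate into $P_1$ or $K$, rule out $P_1$ by the tree argument comparing $|K|=p^3(p^2-1)$ with the edge-stabilizer order $|C|=p^3(p-1)$, and conclude that $|N_G(W)|_p=p^3=|N_S(W)|$, from which full $\mathcal F$-normalization is immediate; for centricity you compute $C_S(W)=W$ from the faithful action of $\SL_2(\mathbb F)$ on its natural module and then pass to all $\mathcal F$-conjugates via the saturation axiom that fully normalized subgroups are fully centralized. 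Both arguments rest on the Bass--Serre tree, but yours effectively identifies $N_G(W)$ as a conjugate of $K$, a cleaner global statement, and it supplies the justification for the centric claim that the paper leaves implicit. The one point worth flagging is that your centric step uses Lemma~\ref{sat}(iii); this is legitimate because saturation is obtained there from \cite[Corollary 3.4]{CP} without reference to the present lemma, so no circularity arises, but the dependence should be noted since the paper's own proof of centricity does not use saturation.
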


\begin{proof} Suppose that $\alpha \in \Hom_\mathcal F(W,S)$ and $|N_S(W\alpha)| > |N_S(W)|$. We have that $\alpha = c_g$ for some $g \in G$ with $W^g \le S$.
 Since $|N_S(W^g)| > p^3$,  $P_1$ is the unique vertex of $\Gamma$ fixed by  $N_S(W^g)$. Since $W^g$ fixes $Kg$ and $Kgh$ for all $h \in N_S(W^g)$, we infer that $W^g$ fixes a path of length at least $5$ with $P$ at the middle vertex, this contradicts Lemma~\ref{sat}(i). Hence $W$ is fully $\mathcal F$-normalized. Since $|N_S(W)|=p^3$, it also follows that $W$ is centric.
\end{proof}

\begin{lemma} The fusion system $\mathcal F={\mathcal F}_S(G)$ is exotic.
\end{lemma}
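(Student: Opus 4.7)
The plan is to assume for contradiction that $\mathcal F = \mathcal F_S(H)$ for some finite group $H$ with $S \in \Syl_p(H)$, and to derive a contradiction using the classification of finite simple groups (CFSG). After the standard reductions (replace $H$ by a minimal realiser of $\mathcal F$ and pass to the quotient by $O_{p'}(H)$) we may assume $O_{p'}(H) = 1$ and $H = O^{p'}(H)\,S$. The subgroup $Q$ is characteristic in $S$, being the unique subgroup of $S$ which is extraspecial of order $p^{p-2}$ and of index $p$, so $N_H(S) \le M := N_H(Q)$; since $\mathcal F_S(P_1) \subseteq \mathcal F$ we obtain $\Aut_{P_1}(Q) \le \Aut_H(Q)$, forcing $M/C_H(Q)$ to induce $\GL_2(p)$ on $Q/Z(Q)$ acting as the symmetric power $V_{p-4}$. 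Similarly, Lemma~\ref{wcl} shows that $W$ is fully $\mathcal F$-normalised, and $\mathcal F_D(K) \subseteq \mathcal F$ gives $N := N_H(W)$ with $N/C_H(W) \cong \SL_2(p)$ and $|N_S(W)| = p^3$.

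Because the embeddings $P_1/C_L(Q) \hookrightarrow M$ and $K \hookrightarrow N$ agree on $C$, the group $H$ is automatically a completion of the free amalgamated product $G$, so $\mathcal F \subseteq \mathcal F_S(H)$ holds for free; the real content of the exoticness statement is that this inclusion is always strict. The next step of the plan is to use CFSG to pin down the components of $H$. The restrictions on the $p$-local structure, namely an extraspecial $Q$ of order $p^{p-2}$ with $\GL_2(p)$ acting on $Q/Z(Q)$ as $V_{p-4}$ together with an $\SL_2(p)$-parabolic on an elementary abelian $W$ of rank $2$, drastically restrict the possibilities for $E(H)$. A Sylow $p$-subgroup of a cross-characteristic simple group of Lie type of order $p^{p-1}$ is abelian or metacyclic and so contains no extraspecial subgroup of order $p^{p-2}$ for $p \ge 5$; alternating groups give iterated wreath products of cyclic groups of order $p$ and likewise fail; the characteristic-$p$ Lie type groups compatible with this Sylow shape are essentially only $\Sp_4(p)$ and $\G_2(p)$, and in each the $\GL_2(p)$-module on $Q/Z(Q)$ is a symmetric power of too low a degree to be $V_{p-4}$ once $p \ge 7$; and a scan of the sporadic groups leaves only $\Co_1$ at $p = 5$ with a Sylow subgroup of the required shape.

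The principal obstacle is disposing of the $\Co_1$ case at $p = 5$ and verifying the exclusions in the CFSG step. As remarked in the introduction, $\Co_1$ \emph{is} a completion of the amalgam for $p = 5$, so $\mathcal F \subseteq \mathcal F_S(\Co_1)$ is automatic; what must be shown is that this containment is strict, by exhibiting a $\Co_1$-morphism between subgroups of $S$ which is not a composition of conjugations arising from $M$ and $N$. This is a concrete check using the known $5$-local structure of $\Co_1$, whose $5$-local subgroups are strictly larger than what the fusion generated from $M$ and $N$ alone provides; once carried out it yields $\mathcal F \ne \mathcal F_S(\Co_1)$ and completes the contradiction. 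Combined with the elimination of the other candidates, this establishes that no finite group has fusion system $\mathcal F$ on $S$, so $\mathcal F$ is exotic.
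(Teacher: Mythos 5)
Your overall strategy (contradiction plus CFSG) is the same as the paper's, but several of your steps are either wrong as stated or left as promissory notes. The most serious problem is the cross-characteristic case: your claim that a Sylow $p$-subgroup of order $p^{p-1}$ of a simple group of Lie type in characteristic $r\ne p$ is ``abelian or metacyclic'' is false. When $p$ divides $|Z(\hat H)|$, i.e.\ $H\cong \PSL_n(r^a)$ or $\PSU_n(r^a)$ with $p\mid(r^a\mp 1,n)$, the Sylow $p$-subgroup contains a toral subgroup of rank $n-2\ge p-2$ extended by Weyl-group elements (for $p=5$ and $\PSL_5(r)$ with $5\,\|\,r-1$ it has order exactly $5^4=p^{p-1}$ and is neither abelian nor metacyclic). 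This is precisely the case the paper has to work hardest on: it compares the abelian toral subgroup with the maximal abelian subgroups of the extraspecial group $Q_0$, which have order $p^{(p-1)/2}$, to force $p=5$, and then excludes $\PSL_5/\PSU_5$ via a subgroup of shape $5^3.\Sym(5)$ that cannot embed in $P_1$. Your treatment of characteristic $p$ is likewise an unproven ``essentially only $\Sp_4(p)$ and $\G_2(p)$'' assertion; the paper avoids any enumeration with a uniform argument: Borel--Tits places $K_0$ in a parabolic $L$ with $W_0\le O_p(L)$, centricity of $W$ (Lemma~\ref{wcl}) forces $Z(O_p(L))\le W_0$, full normalization forces this containment to be proper, and that contradicts $W_0$ being a minimal normal subgroup of $K_0$. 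Your sporadic scan also misses the Monster at $p=7$ (where $|S|=7^6$), which the paper excludes because $\Aut_{\F_1}(Q)\cong 6.\Alt(7)$ is incompatible with $\Aut_{\mathcal F}(Q)$.

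Two further points. The reduction to $H$ almost simple is not ``standard'' and you cannot simply begin discussing $E(H)$: the paper proves it by taking a minimal normal subgroup $N$, showing $Z(S)\le N$, then $W=\langle Z(S)^{K_0}\rangle\le N$, then $S=QW\le N$, so $N=O^{p'}(H)$, and using $|Z(S)|=p$ and the fact that $\langle S^{P_1}\rangle$ is not a $p$-group to conclude $N$ is nonabelian simple with $C_H(N)=1$. Finally, for $\Co_1$ you correctly identify that the issue is strictness of the containment $\mathcal F\subseteq\mathcal F_S(\Co_1)$, but you do not produce the witness; the paper does, pointing to a $5$-local subgroup of shape $5^3.\mathrm{PGO}_3(5)$ whose induced fusion does not exist in $\mathcal F$. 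Without that concrete morphism (and the analogous concrete exclusions above) your argument stops exactly where the hard verifications begin.
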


\begin{proof} Suppose ${\mathcal F}$ is not exotic and let $H$ be a finite group with Sylow $p$-subgroup $S$ such that  ${\mathcal F}_S(H)={\mathcal F}_S(G)$.
Then there exists subgroups $Q_0$ and $W_0$ of $S$ such that $
\Aut_H(Q_0) = \Aut_{\mathcal F}(Q)$ and $\Aut_H(W_0) = \Aut_\mathcal F(W)$. Let $K_0 = N_H(W_0)$ and $P_0= N_{H}(Q_0)$.

We may assume $O_{p^\prime}(H) = 1$. Let $N$ be a minimal normal subgroup of $H$. Then, as $p$ divides $|N|$ and $Z(S)$ has order $p$,   $Z(S) \leq N$. The action of ${K_0}$ implies $W= \langle Z(S)^{K_0} \rangle  \leq N$. Hence $Z_2(S)\le [Q,S] \le N$ and finally $S= QW=\langle Z_2(S)^{P_1}\rangle W \le N$. Hence $N =O^{p'}(H)$. Since $\langle S^{P_1} \rangle$ is not a $p$-group, $N$ is a direct product of isomorphic non-abelian simple groups. Moreover, as $|Z(S)|=p$, $N$ is a simple group and $C_H(N)=1$. Therefore $H$ is an almost simple group. We now consider the finite simple groups as given by the classification theorem.

Recall that $p \geq 5$, $|S|=p^{p-1}$ and $S$ is of exponent $p$ and is not abelian. In particular, $H$ is not an alternating group.

 Suppose that $H$ is  a  Lie type group in characteristic $p$. Then, by the Borel -Tits Theorem,  $K$ is contained in some parabolic subgroup $L$ of $H$ and $W_0\le O_p(K_0) \leq O_p(L)$.  As $W_0$ is centric by Lemma~\ref{wcl} (i),  $W_0 \ge Z(O_p(L))$. Since $W_0$ is fully $\mathcal F$-normalized, $W_0$ is not normal in $L$ and so $Z(O_p(L))<  W_0$, contrary to $K_0 \le L$ and $W_0$ being a minimal normal subgroup of $K_0$. Hence $H$ is not of Lie type in characteristic $p$

 Assume that $H$ is  of Lie type in characteristic $r \not= p$. If  $p$ does not divided $|Z(\hat{H})|$, where $\hat{H}$ is the universal version of $H$, then application of \cite[Theorem 4.10.3(e)]{GLS} to $W_0$ shows $p = 5$ and $H = \E_8(r^a)$. Since $|S|= 5^4$ and $5^5$ divides the order of $\E_8(r^a)$, this is impossible.  So we may assume that $p$ divides   $|Z(\hat{H})|$. In particular $H \cong \PSL_n(r^a)$ or $\PSU_n(r^a)$ and $p$ divides $(r^a-1,n)$ in the first case and $(r^a+1,n)$ in the second.   Therefore $S$ contains a toral subgroup of order $p^{n-2} \ge p^{p-2}$, as $p$ divides $n$. Since $Q_0$ is extraspecial of order $p^{p-2}$ and $|S| = p^{p-1}$, and the largest abelian subgroup of $Q_0$ is of order $p^{(p-1)/2}$, we infer that $(p-1)/2 +1 \geq p-2$, and then $p=5$. Thus $H$ contains a subgroup $L\cong 5^3.\Sym(5)$.  Since this does not embed in $P_1$, this is impossible.

Finally suppose that $H$ is a sporadic simple group.  Inspection of the lists in \cite[Table 5.3]{GLS} shows the either  $H = \Co_1$ and $p = 5$ or $H = \F_1$ and $p = 7$. In the first case we again observe a subgroup $5^3.\mathrm{PGO}_3(5)$, which does not exist in $\mathcal F$.  In the second case  $\Aut_H(Q) \cong 6. \Alt(7)$ which is impossible.

We conclude that $\mathcal F$ is exotic.
\end{proof}

We summarize the above result in a way that we can easily cite it in \cite{almostlie}.

\begin{proposition}\label{lem::exotic} Let $p \geq 5$ be a prime. Set  $P_1 = QL$, where $Q$ is extraspecial of order $p^{p-2}$, $L \cong \GL_2(p)$ and $L'$ induces the irreducible module of homogeneous polynomials in $X,Y$ of degree $p-4$ on $Q/Z(Q)$. Then for $S$ a Sylow $p$-subgroup of $QL$, there is an exotic fusion system on $S$ which extends $\mathcal F_{QL}(S)$.
\end{proposition}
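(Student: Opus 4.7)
The plan is to observe that this proposition is essentially a packaging of the amalgam construction together with the saturation and exoticism results that immediately precede it; the main task is matching up notation.

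First, I would set $\mathbb{F} = \mathbb{F}_p$ and $m = p-4$. Since $p \geq 5$ is odd, $m$ is odd with $1 \leq m \leq p-1$, so the constructions of the amalgam section apply. Lemma~\ref{lem::centralizers1} gives that $Q = V_m \times \mathbb{F}^+$ is extraspecial of order $p^{p-2}$, and by Lemma~\ref{lem::centralizers2}(i) the centralizer $C_L(Q)$ is a cyclic subgroup intersecting $\{1\}\times \GL_2(\mathbb{F})$ trivially. Therefore $P_1 := P/C_L(Q) \cong Q \rtimes \GL_2(p)$, and the image of $L'$ in $P_1$, isomorphic to $\SL_2(p)$, acts on $Q/Z(Q) = V_{p-4}$ as the irreducible module of degree-$(p-4)$ homogeneous polynomials in $X,Y$. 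This $P_1$ coincides with the group $QL$ of the proposition.

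Next, Theorem~\ref{thm::amalgam} applied with these parameters gives the subgroup $W_0 \leq S$, and the explicit homomorphism $\Theta$ constructed just before Theorem~\ref{thm::amalgam} identifies $N_{P_1}(W_0)$ with $C$ in such a way that $W_0$ maps to $W = O_p(K)$. Forming the free amalgamated product $G = P_1 \ast_C K$ then brings all the fusion-theoretic results of the final section into play.

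Lastly, Lemma~\ref{GR} yields that $S$ is a Sylow $p$-subgroup of $G$ and identifies $\mathcal{F} := \mathcal{F}_S(G)$ with $\langle \mathcal{F}_S(P_1), \mathcal{F}_D(K)\rangle$; Lemma~\ref{sat}(iii) gives saturation; and the preceding exoticism lemma completes the argument. Since $\mathcal{F}$ contains $\mathcal{F}_S(P_1) = \mathcal{F}_S(QL)$ as a subsystem, it extends the fusion system on $S$ coming from $QL$. No real obstacle arises here, as the proposition is essentially a restatement of the established results; the only care needed is passing between the two descriptions $P/C_L(Q)$ and $QL \cong Q \rtimes \GL_2(p)$ of $P_1$.
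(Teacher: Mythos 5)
Your proposal matches the paper's own treatment: the proposition is stated there explicitly as a summary of the preceding results, i.e.\ the amalgam construction with $\mathbb F=\mathbb F_p$ and $m=p-4$ (Theorem~\ref{thm::amalgam}), Lemma~\ref{GR}, Lemma~\ref{sat}(iii) for saturation, and the exoticism lemma, with no further argument given. One small inaccuracy in your justificatory aside: $C_L(Q)=\{(\mu^{-m},\left(\begin{smallmatrix}\mu&0\\0&\mu\end{smallmatrix}\right))\}$ meets $\{1\}\times\GL_2(\mathbb F)$ nontrivially whenever $3\mid p-1$ (since $\gcd(m,p-1)=\gcd(p-4,p-1)=\gcd(3,p-1)$), so the identification of $L/C_L(Q)$ with $\GL_2(p)$ requires a slightly different argument than the one you sketch, but this does not affect the substance of the proof.
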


We close the paper by remarking that (when $m=p-4$) the amalgam $B/C_L(Q)*_CK$ also provides an exotic fusion system. This example has exactly one class of essential subgroups.

\end{document}